\newtheorem{theorem}{Theorem}
\theoremstyle{plain}
\newtheorem{corollary}{Corollary}
\newtheorem{definition}{Definition}
\newtheorem{example}{Example}
\newtheorem{proposition}{Proposition}
\newtheorem{remark}{Remark}
\numberwithin{equation}{section}
\begin{document}
\title[dimensionless Sobolev inequalities]{Integral isoperimetric transference and dimensionless Sobolev inequalities}
\author{Joaquim Mart\'{i}n$^{\ast}$}
\address{Department of Mathematics\\
Universitat Aut\`{o}noma de Barcelona}
\email{jmartin@mat.uab.cat}
\author{Mario Milman**}
\email{extrapol@bellsouth.net}
\urladdr{https://sites.google.com/site/mariomilman}
\thanks{$^{\ast}$Partially supported in part by Grants MTM2010-14946, MTM-2010-16232.}
\thanks{**This work was partially supported by a grant from the Simons Foundation
(\#207929 to Mario Milman).}
\thanks{This paper is in final form and no version of it will be submitted for
publication elsewhere.}
\subjclass{2000 Mathematics Subject Classification Primary: 46E30, 26D10.}
\keywords{Sobolev inequalities, symmetrization, isoperimetric inequalities, extrapolation.}

\begin{abstract}
We introduce the concept of Gaussian integral isoperimetric transfererence and show
how it can be applied to obtain a new class of sharp Sobolev-Poincar\'{e}
inequalities with constants independent of the dimension. In the special case
of $L^{q}$ spaces on the unit $n-$dimensional cube our results extend the
recent inequalities that were obtained in \cite{FKS} using extrapolation.
\end{abstract}\maketitle

\section{Introduction}

Let $Q_{n}$ be the open unit cube in $\mathbb{R}^{n},$ let $1\leq q<n$ be
fixed$,$ and let $\frac{1}{p}=\frac{1}{q}-\frac{1}{n};$ a special case of the
classical (homogeneous) Sobolev inequality states that for all $f\in
C_{0}^{\infty}(Q_{n}),$%
\[
\left\|  f\right\|  _{L^{p}(Q_{n})}\leq c(n,q)\left\|  \nabla f\right\|
_{L^{q}(Q_{n})}.
\]
It is well known (cf. \cite{alv}, \cite{ta}) that $c(n,q)\asymp c(q)n^{-\frac
{1}{2}},$ with $c(q)$ independent of $n.$ Triebel \cite{tri}, \cite{tri1},
recently suggested the problem of finding dimension free\footnote{Dealing with
a large number of variables (high dimensionality) gives rise to a set of
specific problems and issues in analysis, e.g. in approximation theory,
numerical analysis, optimization.. (cf. \cite{griebel}).} Sobolev
inequalities, at least in what concerns the constants involved, by means of
replacing the power gain of integrability on the left hand side by a (smaller)
logarithmic gain. This, indeed, can be achieved by different methods (e.g. by
transference from Gaussian inequalities via isoperimetry and symmetrization
(cf. \cite{mamiadv}), by direct transference from Log Sobolev inequalities
\cite{krb1}, by extrapolation, using weighted norms inequalities (cf.
\cite{krb1}, \cite{krb2}), etc.). The sharper known results in this direction
give inequalities of the form%
\begin{equation}
\left\|  f\right\|  _{L^{q}(LogL)^{q/2}(Q_{n})}\leq C(q)\left\|  \nabla
f\right\|  _{L^{q}(Q_{n})},\text{ }f\in C_{0}^{\infty}(Q_{n}),
\label{launoprima}%
\end{equation}
with $C(q)$ independent of the dimension, and a corresponding non-homogeneous
version: for $f\in W^{1,1}(Q_{n}),$ we have%
\begin{equation}
\left\|  f\right\|  _{L^{q}(LogL)^{q/2}(Q_{n})}\leq C(q)\left(  \left\|
f\right\|  _{L^{q}(Q_{n})}+\left\|  \nabla f\right\|  _{L^{q}(Q_{n})}\right)
. \label{launo}%
\end{equation}

The proof via Gaussian isoperimetric transference obtained in \cite{mamiadv}
explains the presence of the factor $\frac{1}{2}$ in the logarithmic exponent.
To summarize, sacrificing the power gain of integrability of the classical
Sobolev inequality for the log Sobolev gain of integrability associated with
Gaussian measure, we obtain Sobolev inequalities on the unit cube $Q_{n}$ with
constants independent of the dimension.

It is not hard to see that the logarithmic inequalities (\ref{launo}),
(\ref{launoprima}), give the optimal results within the class of
$L^{q}(LogL)^{r}$ spaces. However, Fiorenza-Krbec-Schmeisser \cite{FKS} have
recently shown that, on the larger class of rearrangement invariant spaces,
the optimal inequality with dimensionless constants corresponding to
(\ref{launoprima}) is given by%
\begin{equation}
\left\|  f\right\|  _{L_{(q,\frac{q^{\prime}}{2}}(Q_{n})}\leq c(q)\left\|
\nabla f\right\|  _{L^{q}(Q_{n})},\text{ }f\in C_{0}^{\infty}(Q_{n}),
\label{lafks}%
\end{equation}
where the space $L_{(q,\frac{q^{\prime}}{2}}(Q_{n})$ is the so called `small'
Lebesgue space introduced\footnote{As the dual of the `grand' Lebesgue spaces,
introduced by Iwaniec and Sbordone \cite{iw}.} by Fiorenza \cite{fio}. The
space $L_{(q,\frac{q^{\prime}}{2}}(Q_{n})$ is defined by means of the
following norm%
\begin{equation}
\left\|  f\right\|  _{L_{(q,q^{\prime}}(Q_{n})}=\inf_{f=\sum f_{j}}\left(
\sum_{j}\inf_{0<\varepsilon<q^{\prime}-1}\varepsilon^{-\frac{q^{\prime}%
/2}{q^{\prime}-\varepsilon}}\left\|  f_{j}\right\|  _{L^{(q^{\prime
}-\varepsilon)^{\prime}}(Q_{n})}\right)  , \label{lasmall1}%
\end{equation}
where as usual $q^{\prime}=q/(q-1).$ Moreover, $L_{(q,\frac{q^{\prime}}{2}%
}(Q_{n})$ can be characterized as an extrapolation space in the sense of
Karadzhov-Milman \cite{kami}; therefore, its norm can be computed explicitly
(cf. \cite{fioka}),
\begin{equation}
\left\|  f\right\|  _{L_{(q,q^{\prime}}(Q_{n})}\approx\int_{0}^{1}\left(
\int_{0}^{t}f^{\ast}(s)^{q}ds\right)  ^{1/q}\frac{dt}{t(\log\frac{1}%
{t})^{\frac{1}{2}}}. \label{lasmall2}%
\end{equation}
As usual, the symbol $f\approx g$ will indicate the existence of a universal
constant $C>0$ (independent of all parameters involved) so that $(1/C)f\leq
g\leq C\,f$, while the symbol $f\preceq g$ means that for a suitable constant
$C,$ $f\leq C\,g,$ and likewise $f\succeq g$ means that $f\geq Cg.$

It is not difficult to see that (cf. \cite{FKS}, see also (\ref{laursa})
below)
\[
L_{(q,q^{\prime}}(Q_{n})\varsubsetneq L^{q}(LogL)^{q/2}(Q_{n}).
\]

The proof of the inequality (\ref{lafks}) given in \cite{FKS} depends on
extrapolation and is accomplished using (\ref{lasmall1}) or (\ref{lasmall2}),
and the explicit form of the Sobolev embedding constant$.$

In this paper we investigate the connection of inequalities of the form
(\ref{lafks}) to the isoperimetric inequality and show a new associated
transference principle. We work on metric probability spaces and study a class
of Sobolev inequalities which include (\ref{lafks}), which are valid if the
isoperimetric profile satisfies a suitable integrability condition. In
particular, inequalities of the form (\ref{lafks}) are connected with what
could be termed a Gaussian transfer condition, as we now explain.

Underlying our method are certain pointwise rearrangement inequalities for
Sobolev functions which are associated with the isoperimetric profile of a
given geometry (cf. \cite{mamiadv}, \cite{mamiast}). Using these pointwise
inequalities we will obtain, by integral transference, inequalities that are
stronger than the usual transferred (log) Sobolev inequalities, while still
preserving the dimensionless of the constants involved. More generally, we are
able to give a unified approach to a class of dimensionless inequalities for
different geometries, that hold within the class of general rearrangement
invariant spaces, and as we shall show elsewhere, encompass
fractional\footnote{The basic fractional inequalities that underlie our
analysis were obtained in \cite{mamiast}.} inequalities as well.

To discuss the results of the paper in more detail it will be useful to recall
first a version of the transference principle developed in \cite{mamiadv}. It
was shown in \cite{mamiadv} that, for a large class of connected metric
probability spaces $(\Omega,d,\mu),$ with associated isoperimetric
profile\footnote{See Section \ref{back} below; in particular we assume that
\ $I(t)$ is concave and symmetric about $1/2.$} $I=I_{(\Omega,\mu)}$ and for
rearrangement invariant spaces $\bar{X}=$ $\bar{X}(0,1)$ on $(0,1),$ that in a
suitable technical sense are `away' from $L^{1},$ we have the following
Sobolev-Poincar\'{e} inequality: For all Lipchitz functions on $\Omega,$
\begin{equation}
\left\|  \left(  f^{\ast\ast}(t)-f^{\ast}(t)\right)  \frac{I(t)}{t}\right\|
_{\bar{X}}\leq c\left\|  \left|  \nabla f\right|  ^{\ast}\right\|  _{\bar{X}},
\label{lasobo}%
\end{equation}
where $f^{\ast}$ is the non-increasing rearrangement\footnote{Precise
definitions and properties of rearrangements and related topics coming into
play in this section are contained in Section \ref{back}} of $f,$ $f^{\ast
\ast}(t)=\frac{1}{t}\int_{0}^{t}f^{\ast}(s)ds,$ and $c$ is a universal
constant that depends only on $\bar{X}.$

The inequality (\ref{lasobo}) is best possible\footnote{as far as the
condition on the left hand side (or target space).}, but the isoperimetric
profile involved may depend on the dimension, and thus there could be a
dimensional dependency in the constants. A natural method to obtain
dimensionless inequalities is to weaken the norm inequality (\ref{lasobo}) via
transference. For example, if $(\Omega,d,\mu)$ is of ``Gaussian isoperimetric
type'', i.e. if for some universal constant independent of the dimension, it
holds
\begin{equation}
I_{(\Omega,\mu)}(t)\succeq t\left(  \log\frac{1}{t}\right)  ^{\frac{1}{2}%
},\text{ on }\left(  0,\frac{1}{2}\right)  ; \label{laiso}%
\end{equation}
then the Gaussian log Sobolev inequalities can be transferred to
$(\Omega,d,\mu)$ with constants independent of the dimension. The method of
\cite{mamiadv} simply expresses the fact that from (\ref{lasobo}) and
(\ref{laiso}) we can see that for all r.i. spaces away from $L^{1},$ we have%
\begin{equation}
\left\|  \left(  f^{\ast\ast}(t)-f^{\ast}(t)\right)  \left(  \log\frac{1}%
{t}\right)  ^{\frac{1}{2}}\right\|  _{\bar{X}}\leq c\left\|  \left|  \nabla
f\right|  ^{\ast}\right\|  _{\bar{X}}. \label{lavieja}%
\end{equation}
In particular, the Euclidean unit $n-$dimensional cube $Q_{n}$ is of Gaussian
type, with constant $1$ (cf. \cite{Ros}), therefore (\ref{lavieja}) for
$\bar{X}=L^{q}$ gives (\ref{launo}). More generally, these inequalities can be
reformulated as
\[
\left\|  \left(  f^{\ast\ast}(t)-f^{\ast}(t)\right)  G_{\infty}(t)\right\|
_{\bar{X}}\leq c\left\|  G_{\infty}\frac{t}{I(t)}\right\|  _{L^{\infty
}(0,\frac{1}{2})}\left\|  \left|  \nabla f\right|  ^{\ast}\right\|  _{\bar{X}%
},
\]
where (\ref{lavieja}) corresponds to the choice $G_{\infty}(t)=(\log\frac
{1}{t})^{\frac{1}{2}}.$

The inequality (\ref{lasobo}) was proved under the assumption that
$\frac{I(t)}{t}$ decreases$,$ an assumption we shall keep in this paper. To
proceed further we note that the left hand side of (\ref{lasobo}) can be
minorized as follows,%
\begin{align*}
\left\|  \left(  f^{\ast\ast}(\cdot)-f^{\ast}(\cdot)\right)  \frac{I(\cdot
)}{(\cdot)}\right\|  _{\bar{X}}  &  \geq\left\|  \left(  f^{\ast\ast}%
(\cdot)-f^{\ast}(\cdot)\right)  \chi_{(0,t)}(\cdot)\frac{I(\cdot)}{(\cdot
)}\right\|  _{\bar{X}}\\
&  \geq\left\|  \left(  f^{\ast\ast}(\cdot)-f^{\ast}(\cdot)\right)
\chi_{(0,t)}(\cdot)\right\|  _{\bar{X}}\frac{I(t)}{t}.
\end{align*}
Therefore, we have%
\begin{equation}
\left\|  \left(  f^{\ast\ast}(\cdot)-f^{\ast}(\cdot)\right)  \chi
_{(0,t)}(\cdot)\right\|  _{\bar{X}}\leq c\frac{t}{I(t)}\left\|  \left|  \nabla
f\right|  ^{\ast}\right\|  _{\bar{X}}. \label{larusa}%
\end{equation}
Now, if $G$ is such that $\left(  \int_{0}^{1}G(t)\frac{t}{I(t)}dt\right)
<\infty,$ it follows immediately from (\ref{larusa}) that%
\[
\int_{0}^{1}\left\|  \left(  f^{\ast\ast}(\cdot)-f^{\ast}(\cdot)\right)
\chi_{(0,t)}(\cdot)\right\|  _{\bar{X}}G(t)dt\leq C\left(  \int_{0}%
^{1}G(t)\frac{t}{I(t)}dt\right)  \left\|  \left|  \nabla f\right|  ^{\ast
}\right\|  _{\bar{X}}.
\]
For example, for $G(t)=\frac{1}{t\left(  \log\frac{1}{t}\right)  ^{\frac{1}%
{2}}},$ we are able to control the functional
\[
\int_{0}^{1}\left\|  \left(  f^{\ast\ast}(s)-f^{\ast}(s)\right)  \chi
_{(0,t)}(s)\right\|  _{\bar{X}}\frac{dt}{t\left(  \log\frac{1}{t}\right)
^{\frac{1}{2}}},
\]
as long as the following (stronger) Gaussian isoperimetric transference
condition is satisfied,
\begin{equation}
\int_{0}^{1}\frac{dt}{I(t)(\log\frac{1}{t})^{\frac{1}{2}}}<\infty.
\label{laconversa}%
\end{equation}
To compare the different results let us note that by the triangle inequality
we have%
\begin{align*}
I  &  =\int_{0}^{1}\left\|  (f^{\ast\ast}(\cdot)-f^{\ast}(\cdot))\chi
_{(0,t)}(\cdot)\right\|  _{\bar{X}}\frac{dt}{t\left(  \log\frac{1}{t}\right)
^{1/2}}\\
&  =\int_{0}^{1}\left\|  (f^{\ast\ast}(\cdot)-f^{\ast}(\cdot))\chi
_{(0,t)}(\cdot)\frac{1}{t\left(  \log\frac{1}{t}\right)  ^{1/2}}\right\|
_{\bar{X}}dt\\
&  =\int_{0}^{1}\left\|  h(\cdot,t)\right\|  _{\bar{X}}dt,\text{ where
}h(s,t)=(f^{\ast\ast}(s)-f^{\ast}(s))\chi_{(0,t)}(s)\frac{1}{t\left(
\log\frac{1}{t}\right)  }\\
&  \geq\left\|  \int_{0}^{1}h(s,t)dt\right\|  _{\bar{X}}\\
&  =\left\|  (f^{\ast\ast}(\cdot)-f^{\ast}(\cdot))\int_{s}^{1}\frac
{1}{t\left(  \log\frac{1}{t}\right)  ^{1/2}}dt\right\|  _{\bar{X}}\\
&  \geq2\left\|  (f^{\ast\ast}(\cdot)-f^{\ast}(\cdot))\left(  \log(\frac
{1}{\cdot})\right)  ^{1/2}\right\|  _{\bar{X}}%
\end{align*}
Thus, if (\ref{laconversa}) holds, we have\footnote{which of course is still
weaker than the optimal inequality (\ref{lasobo}).}
\begin{align}
\left\|  (f^{\ast\ast}(\cdot)-f^{\ast}(\cdot))\left(  \log(\frac{1}{\cdot
})\right)  ^{1/2}\right\|  _{\bar{X}}  &  \leq c\int_{0}^{1}\left\|  \left(
f^{\ast\ast}(s)-f^{\ast}(s)\right)  \chi_{(0,t)}(s)\right\|  _{\bar{X}}%
\frac{dt}{t\left(  \log\frac{1}{t}\right)  ^{\frac{1}{2}}}\nonumber\\
&  \leq c\left(  \int_{0}^{1}\frac{dt}{I(t)(\log\frac{1}{t})^{\frac{1}{2}}%
}\right)  \left\|  \left|  \nabla f\right|  ^{\ast}\right\|  _{\bar{X}}.
\label{laursa}%
\end{align}

At this point we can explain what could be gained from our efforts. For
suitable domains, e.g. for the Euclidean unit cubes $Q_{n},$ or, more
generally, for some other classes of $n-$Euclidean domains, the corresponding
integral conditions can be estimated by a constant independent of the
dimension as follows
\begin{equation}
\sup_{n}\int_{0}^{1}\frac{dt}{I_{n}(t)(\log\frac{1}{t})^{\frac{1}{2}}}<\infty.
\label{laconstante}%
\end{equation}
When (\ref{laconstante}) holds the resulting inequalities we have thus
obtained are both dimensionless and stronger than the ones that could be
derived via the `pointwise' Gaussian transference condition.

The restriction that the space $\bar{X}$ must be `away from $L^{1}$' can be
removed using the generalized P\'{o}lya-Szeg\"{o} principle of \cite{mamiadv}:
Under the assumption that $\bar{X}$ is `away from $L^{\infty}$', we can
replace (\ref{larusa}) by (cf. Theorem \ref{main} below),%
\[
\left\|  \left(  f^{\ast}(\cdot)-f^{\ast}(t)\right)  \chi_{\lbrack0,t)}%
(\cdot)\right\|  _{\bar{X}}\leq c\frac{t}{I(t)}\left\|  Q\right\|  _{\bar
{X}\rightarrow\bar{X}}\left\|  \left|  \nabla f\right|  ^{\ast}\right\|
_{\bar{X}},
\]
where $Qg(t)=\int_{t}^{1}g(s)\frac{ds}{s},$ and from this point the analysis
can proceed along the lines outlined above.

We will also show a partial converse of this result, for example, the
inequality
\begin{equation}
\int_{0}^{1}\left\|  \left(  f^{\ast}(\cdot)-f^{\ast}(t)\right)  \chi
_{\lbrack0,t)}(\cdot)\right\|  _{L^{1}}\frac{dt}{t(\log\frac{1}{t})^{1/2}}\leq
C\left\|  \left|  \nabla f\right|  ^{\ast}\right\|  _{L^{1}}, \label{lapro}%
\end{equation}
implies that the profile of $(\Omega,d,\mu)$ must satisfy a Gaussian type
condition (cf. Corollary \ref{ma1} below)
\[
\frac{t}{I(t)}\int_{t}^{1}\frac{dt}{t(\log\frac{1}{t})^{\frac{1}{2}}}\leq C.
\]

The paper is organized as follows. In Section \ref{back} we collect the
necessary information concerning symmetrizations, isoperimetric profiles and
function spaces considered in this paper; the main inequalities of this paper
are proved in Section \ref{thema}, while Section \ref{appl} contains
applications to different geometries\footnote{For further possible metric
measure spaces were one could consider applications of our method we refer to
\cite{emi} and the very recent \cite{emiro}.}; in particular, in Subsection
\ref{markao} we show in detail how our approach, in the special case of the
unit cubes $Q_{n}$ and $L^{q}$ spaces, yields (\ref{lafks}).

\section{Background\label{back}}

\subsection{Rearrangements}

Let $\left(  \Omega,d,\mu\right)  $ be a Borel probability metric space. For
measurable functions $u:\Omega\rightarrow\mathbb{R},$ the distribution
function of $u$ is given by
\[
\mu_{u}(t)=\mu\{x\in{\Omega}:u(x)>t\}\text{ \ \ \ \ }(t\in\mathbb{R}).
\]
The \textbf{decreasing rearrangement}\footnote{Note that this notation is
somewhat unconventional. In the literature it is common to denote the
decreasing rearrangement of $\left|  u\right|  $ by $u_{\mu}^{\ast},$ while
here it is denoted by $\left|  u_{\mu}\right|  ^{\ast}$ since we need to
distinguish between the rearrangements of $u$ and $\left|  u\right|  .$ In
particular, the rearrangement of $u$ can be negative. We refer the reader to
\cite{rako} and the references quoted therein for a complete treatment.} of a
function $u$ is the right-continuous non-increasing function from $[0,1)$ into
$\mathbb{R}$ which is equimeasurable with $u.$ It can be defined by the
formula%
\[
u_{\mu}^{\ast}(s)=\inf\{t\geq0:\mu_{u}(t)\leq s\},\text{ \ }s\in\lbrack0,1),
\]
and satisfies
\[
\mu_{u}(t)=\mu\{x\in{\Omega}:u(x)>t\}=m\left\{  s\in\lbrack0,1):u_{\mu}^{\ast
}(s)>t\right\}  \text{\ ,\ }t\in\mathbb{R}\text{,}%
\]
where $m$ denotes the Lebesgue measure on $[0,1).$

The maximal average $u_{\mu}^{\ast\ast}(t)$ is defined by
\[
u_{\mu}^{\ast\ast}(t)=\frac{1}{t}\int_{0}^{t}u_{\mu}^{\ast}(s)ds=\frac{1}%
{t}\sup\left\{  \int_{E}u(s)d\mu:\mu(E)=t\right\}  ,\text{ }t>0.
\]
It follows directly from the definition that $(u+v)_{\mu}^{\ast\ast}(t)\leq
u_{\mu}^{\ast\ast}(t)+v_{\mu}^{\ast\ast}(t),$ moreover, since $u_{\mu}^{\ast}$
is decreasing, it follows that $u_{\mu}^{\ast\ast}$ is also decreasing, and
$u_{\mu}^{\ast}\leq u_{\mu}^{\ast\ast}$.

When the probability we are working with is clear from the context, or when we
are dealing with Lebesgue measure, we may simply write $u^{\ast}$ and
$u^{\ast\ast}$, etc.

\subsection{Isoperimetry}

In what follows we always assume that we work with connected Borel probability
metric spaces $\left(  \Omega,d,\mu\right)  $, which we shall simply refer to
as ``measure probability metric spaces''.

Recall that for a Borel set $A\subset\Omega,$ the \textbf{perimeter} or
\textbf{Minkowski content} of $A$ is defined by
\[
P(A;\Omega)=\lim\inf_{h\rightarrow0}\frac{\mu\left(  A_{h}\right)  -\mu\left(
A\right)  }{h},
\]
where $A_{h}=\left\{  x\in\Omega:d(x,A)<h\right\}  .$

The \textbf{isoperimetric profile} is defined by
\[
I_{\Omega}(s)=I_{(\Omega,d,\mu)}(s)=\inf\left\{  P(A;\Omega):\text{ }%
\mu(A)=s\right\}  ,
\]
i.e. $I_{(\Omega,d,\mu)}:[0,1]\rightarrow\left[  0,\infty\right)  $ is the
pointwise maximal function such that%
\[
P(A;\Omega)\geq I_{\Omega}(\mu(A)),
\]
holds for all Borel sets $A$.\textbf{ }Again, when no confusion arises, we
shall drop the subindex $\Omega$ and simply write $I.$

We will always assume that, for the probability metric spaces $(\Omega,d,\mu)$
under consideration, the associated isoperimetric profile $I_{\Omega}$
satisfies that, $I(0)=0,$ $I$ is continuous, concave and symmetric about
$\frac{1}{2}.$ In many cases it is enough to control an `isoperimetric
estimator', i.e. a function $J:[0,\frac{1}{2}]\rightarrow\left[
0,\infty\right)  $ with the same properties as $I$ and such that
\[
I_{\Omega}(t)\geq J(t),\text{ \ \ }t\in(0,1/2).
\]

For a Lipschitz function $f$ on $\Omega$ (briefly $f\in Lip(\Omega))$ we
define the \textbf{modulus of the gradient} by\footnote{In fact one can define
$\left|  \nabla f\right|  $ for functions $f$ that are Lipschitz on every ball
in $(\Omega,d)$ (cf. \cite[pp. 2, 3]{bobk} for more details).}
\[
|\nabla f(x)|=\limsup_{d(x,y)\rightarrow0}\frac{|f(x)-f(y)|}{d(x,y)}.
\]

We shall now summarize some useful inequalities that relate the isoperimetry
with the rearrangements of Lipschitz functions; for more details we refer to
\cite{mamiadv} and \cite{mamiproc}.

\begin{theorem}
\label{Iso}The following statements are equivalent

\begin{enumerate}
\item  Isoperimetric inequality: $\forall A\subset\Omega,$ Borel set with
\[
P(A;\Omega)\geq I(\mu(A)),
\]

\item  Oscillation inequality: $\forall f\in Lip(\Omega),$%
\begin{equation}
(f_{\mu}^{\ast\ast}(t)-f_{\mu}^{\ast}(t))\frac{I(t)}{t}\leq\frac{1}{t}\int
_{0}^{t}\left|  \nabla f\right|  _{\mu}^{\ast}(s)ds,\text{ \ \ }0<t<1.
\label{reod00}%
\end{equation}

\item  P\'{o}lya-Szeg\"{o} inequality: $\forall f\in Lip(\Omega),$ $f_{\mu
}^{\ast}$ is locally absolutely continuous and satisfies
\begin{equation}
\int_{0}^{t}\left(  \left(  -f_{\mu}^{\ast}\right)  ^{\prime}(\cdot
)I(\cdot)\right)  ^{\ast}(s)ds\leq\int_{0}^{t}\left|  \nabla f\right|  _{\mu
}^{\ast}(s)ds\text{, \ \ \ \ \ \ }0<t<1. \label{aa}%
\end{equation}
(The second rearrangement on the left hand side is with respect to the
Lebesgue measure on $\left[  0,1\right)  $).
\end{enumerate}
\end{theorem}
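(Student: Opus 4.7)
The strategy is to establish the cycle (1) $\Rightarrow$ (3) $\Rightarrow$ (2) $\Rightarrow$ (1), with the heart of the argument being the coarea passage between the isoperimetric inequality and the rearrangement estimates on Lipschitz functions.

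For (1) $\Rightarrow$ (3), the starting point is the coarea formula combined with the isoperimetric hypothesis: for any level $\tau>0$,
\[
\int_{\{f>\tau\}}|\nabla f|\,d\mu\;=\;\int_{\tau}^{\infty}P(\{f>s\};\Omega)\,ds\;\geq\;\int_{\tau}^{\infty}I(\mu_{f}(s))\,ds.
\]
Taking $\tau=f_{\mu}^{\ast}(t)$ and changing variables $u=\mu_{f}(s)$ (so $s=f_{\mu}^{\ast}(u)$), the rightmost integral becomes $\int_{0}^{t}I(u)(-f_{\mu}^{\ast})'(u)\,du$, while the leftmost is bounded above by $\int_{0}^{t}|\nabla f|_{\mu}^{\ast}(u)\,du$ by Hardy--Littlewood. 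This proves the interval form of the Maz'ya inequality
\[
\int_{0}^{t}(-f_{\mu}^{\ast})'(u)\,I(u)\,du\;\leq\;\int_{0}^{t}|\nabla f|_{\mu}^{\ast}(u)\,du,\qquad 0<t<1.
\]
To promote this to the rearranged form (\ref{aa}), I would repeat the argument with the truncated Lipschitz functions $f_{c}=(f-c)_{+}$, for which $|\nabla f_{c}|=|\nabla f|\chi_{\{f>c\}}$. Varying $c$ controls the integral of $(-f_{\mu}^{\ast})'I$ over any sublevel set of this product by a corresponding integral of the monotone rearrangement of a truncation of $|\nabla f|$, and a Hardy-type lemma then delivers (\ref{aa}).

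For (3) $\Rightarrow$ (2), integration by parts yields the algebraic identity
\[
f_{\mu}^{\ast\ast}(t)-f_{\mu}^{\ast}(t)\;=\;\frac{1}{t}\int_{0}^{t}s(-f_{\mu}^{\ast})'(s)\,ds.
\]
Since $I$ is concave with $I(0)=0$, the ratio $I(s)/s$ is non-increasing, so $sI(t)/t\leq I(s)$ for $s\leq t$. Multiplying by $I(t)/t$ and combining with Hardy--Littlewood and (\ref{aa}),
\[
(f_{\mu}^{\ast\ast}(t)-f_{\mu}^{\ast}(t))\frac{I(t)}{t}\;\leq\;\frac{1}{t}\int_{0}^{t}I(s)(-f_{\mu}^{\ast})'(s)\,ds\;\leq\;\frac{1}{t}\int_{0}^{t}\left((-f_{\mu}^{\ast})'I\right)^{\ast}(s)\,ds\;\leq\;\frac{1}{t}\int_{0}^{t}|\nabla f|_{\mu}^{\ast}(s)\,ds,
\]
which is exactly (\ref{reod00}).

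For (2) $\Rightarrow$ (1), I would test (\ref{reod00}) on the Lipschitz approximants $f_{h}(x)=\max(1-d(x,A)/h,\,0)$ attached to a Borel set $A$ with $\mu(A)=a\in(0,1)$. Then $|\nabla f_{h}|\leq h^{-1}\chi_{A_{h}\setminus A}$ and $f_{h}\to\chi_{A}$ pointwise, so $f_{h,\mu}^{\ast\ast}(a)-f_{h,\mu}^{\ast}(a)\to 1$. Evaluating (\ref{reod00}) at $t=a$ and letting $h\to0^{+}$, the left-hand side tends to $I(a)/a$ while the right-hand side is majorized by $\mu(A_{h}\setminus A)/(ah)$, whose $\liminf$ is $P(A;\Omega)/a$; this recovers $I(a)\leq P(A;\Omega)$. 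The principal technical obstacle is the (1) $\Rightarrow$ (3) step, specifically the upgrade from the interval-wise Maz'ya inequality to its fully rearranged form (\ref{aa}): because $(-f_{\mu}^{\ast})'\,I$ need not itself be monotone, Hardy's lemma does not apply directly, and the truncation device is needed. The delicate change of variables in the coarea formula on flat portions of $f_{\mu}^{\ast}$ would be handled with the standard Federer density arguments imported from \cite{mamiadv}.
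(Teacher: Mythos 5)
The paper itself supplies no proof of Theorem~\ref{Iso}; it explicitly defers to \cite{mamiadv} and \cite{mamiproc}, so I am comparing your sketch against the arguments in those references. Your cycle $(1)\Rightarrow(3)\Rightarrow(2)\Rightarrow(1)$ is the right one and, at the level of strategy, matches the cited work. The steps $(3)\Rightarrow(2)$ (integration by parts plus the monotonicity of $I(s)/s$ coming from concavity) and $(2)\Rightarrow(1)$ (testing on $f_{h}=\max(1-d(\cdot,A)/h,0)$) are essentially correct, though in $(2)\Rightarrow(1)$ you should note that one may assume $P(A;\Omega)<\infty$, which forces $\mu(\bar A)=\mu(A)$; otherwise $f_{h,\mu}^{\ast}(a)\equiv1$ and the oscillation at $t=a$ vanishes, giving no information, and it is cleaner to evaluate at $t>a$ and let $t\downarrow a$ using continuity of $I$.

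The genuine gap is in $(1)\Rightarrow(3)$, precisely where you flag the difficulty. The one-sided truncations $f_{c}=(f-c)_{+}$ do not do the job: applying the interval Maz'ya estimate to $f_{c}$ only reproduces the inequality over $(0,b)$ with $c=f_{\mu}^{\ast}(b)$, i.e.\ nothing beyond what you already have for $f$ itself. To reach the rearranged form $\int_{0}^{t}\bigl((-f_{\mu}^{\ast})'I\bigr)^{\ast}\le\int_{0}^{t}|\nabla f|_{\mu}^{\ast}$ you must show that for every finite union of disjoint intervals $E=\bigcup_{j}(a_{j},b_{j})$ one has $\int_{E}(-f_{\mu}^{\ast})'(u)I(u)\,du\le\int_{0}^{|E|}|\nabla f|_{\mu}^{\ast}(s)\,ds$, and that requires controlling each $\int_{a_{j}}^{b_{j}}(-f_{\mu}^{\ast})'I$ separately. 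This is done with the \emph{two-sided} truncations $f_{c_{j}}^{d_{j}}=\min(\max(f-c_{j},0),d_{j}-c_{j})$ with $c_{j}=f_{\mu}^{\ast}(b_{j})$, $d_{j}=f_{\mu}^{\ast}(a_{j})$ (equivalently, applying the coarea identity over the annular level set $\{c_{j}<f<d_{j}\}$, whose measure is at most $b_{j}-a_{j}$); the disjointness of the supports of $|\nabla f_{c_{j}}^{d_{j}}|$ is what lets you sum the Hardy--Littlewood bounds into a single $\int_{0}^{|E|}|\nabla f|_{\mu}^{\ast}$. A second point you pass over is the assertion, built into statement (3), that $f_{\mu}^{\ast}$ is locally absolutely continuous: this is not automatic from monotonicity and requires its own argument (via the level-set/coarea estimate) before $(-f_{\mu}^{\ast})'$ can be integrated against $I$ at all.
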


\begin{remark}
Note that $f\in Lip(\Omega)$ implies that $\left|  f\right|  \in Lip(\Omega)$
and $\left|  \nabla\left|  f\right|  \right|  \leq\left|  \nabla f\right|  ,$
consequently, (\ref{reod00}) and (\ref{aa}) hold for $\left|  f\right|  .$
\end{remark}

\subsection{Rearrangement invariant spaces\label{secc:ri}}

We recall briefly the basic definitions and conventions we use from the theory
of rearrangement-invariant (r.i.) spaces, and refer the reader to \cite{BS}
for a complete treatment.

Let $({\Omega},\mu)$ be a probability measure space. Let $X=X({\Omega})$ be a
Banach function space on $({\Omega},\mu),$ with the Fatou
property\footnote{This means that if $f_{n}\geq0,$ and $f_{n}\uparrow f,$ then
$\left\|  f_{n}\right\|  _{X}\uparrow\left\|  f\right\|  _{X}$ (i.e. Fatou's
Lemma holds in the $X$ norm).}. We shall say that $X$ is a
\textbf{rearrangement-invariant} (r.i.) space, if $g\in X$ implies that all
$\mu-$measurable functions $f$ with $\left|  f\right|  _{\mu}^{\ast}=\left|
g\right|  _{\mu}^{\ast}$ also belong to $X$ and, moreover, $\Vert f\Vert
_{X}=\Vert g\Vert_{X}$. For any r.i. space $X({\Omega})$ we have%

\[
L^{\infty}(\Omega)\subset X(\Omega)\subset L^{1}(\Omega),
\]
with continuous embeddings. Typical examples of r.i. spaces are the $L^{p}%
$-spaces, Orlicz spaces, Lorentz spaces, Marcinkiewicz spaces, etc.

The associated space $X^{\prime}(\Omega)$ is the r.i. space defined by the
following norm%
\[
\left\|  h\right\|  _{X^{\prime}(\Omega)}=\sup_{g\neq0}\frac{\int_{\Omega
}\left|  g(x)h(x)\right|  d\mu}{\left\|  g\right\|  _{X(\Omega)}}=\sup
_{g\neq0}\frac{\int_{0}^{\mu(\Omega)}\left|  g\right|  _{\mu}^{\ast}(s)\left|
h\right|  _{\mu}^{\ast}(s)ds}{\left\|  g\right\|  _{X(\Omega)}}.
\]
In particular, the following \textbf{generalized H\"{o}lder's inequality }holds%

\[
\int_{\Omega}\left|  g(x)h(x)\right|  d\mu\leq\left\|  g\right\|  _{X(\Omega
)}\left\|  h\right\|  _{X^{\prime}(\Omega)}.
\]

Let $X({\Omega})$ be a r.i. space, then there exists a \textbf{unique} r.i.
space (the \textbf{representation space} of $X({\Omega})),$ $\bar{X}=\bar
{X}(0,1)$ on $\left(  \left(  0,1\right)  ,m\right)  $, (where $m$ denotes the
Lebesgue measure on the interval $(0,1)$) such that%
\[
\Vert f\Vert_{X({\Omega})}=\Vert\left|  f\right|  _{\mu}^{\ast}\Vert_{\bar
{X}(0,1)};
\]
and
\[
X^{\prime}(\Omega)=\bar{X}^{\prime}(0,1).
\]
For example, for $1\leq p<\infty,$%
\[
\Vert f\Vert_{L^{p}({\Omega})}=\left(  \int_{{\Omega}}\left|  f\right|
^{p}(x)d\mu\right)  ^{1/p}=\left(  \int_{{0}}^{1}\left(  \left|  f\right|
_{\mu}^{\ast}(s)\right)  ^{p}ds\right)  ^{1/p}=\Vert\left|  f\right|  _{\mu
}^{\ast}\Vert_{\bar{L}^{p}(0,1)},
\]
and%
\[
\Vert f\Vert_{L^{\infty}({\Omega})}=ess\sup\left|  f(z)\right|  =\left|
f\right|  _{\mu}^{\ast}(0^{+})=\Vert\left|  f\right|  _{\mu}^{\ast\ast}%
\Vert_{\bar{L}^{\infty}(0,1)}.
\]
If $\bar{Y}(0,1)$ is a r.i. space $\left(  \left(  0,\mu(\Omega)\right)
,m\right)  ,$ then defining
\[
\Vert f\Vert_{Y({\Omega})}:=\Vert\left|  f_{\mu}\right|  ^{\ast}\Vert_{\bar
{Y}(0,1)}%
\]
we obtain a r.i. space on $({\Omega},\mu),$ in fact there is a one-to-one
correspondence between r.i. spaces on $({\Omega},\mu)$ and those over $\left(
\left(  0,1\right)  ,m\right)  .$ In what follows if there is no possible
confusion we shall use $X$ or $\bar{X}$ without warning.

The following majorization principle holds for r.i. spaces: if
\begin{equation}
\int_{0}^{r}\left|  f\right|  _{\mu}^{\ast}(s)ds\leq\int_{0}^{r}\left|
g\right|  _{\mu}^{\ast}(s)ds, \label{hardy}%
\end{equation}
\ holds for all\ $r>0,$ then, for any r.i. space $\bar{X},$%
\[
\left\|  \left|  f\right|  _{\mu}^{\ast}\right\|  _{\bar{X}}\leq\left\|
\left|  g\right|  _{\mu}^{\ast}\right\|  _{\bar{X}}.
\]

\begin{remark}
\label{Hardy1}The following variant of the majorization principle holds.
Suppose that (\ref{hardy}) holds, then for all $t>0,$%
\[
\Vert\left|  f\right|  _{\mu}^{\ast}(\cdot)\chi_{\lbrack0,t)}(\cdot
)\Vert_{\bar{X}}\leq\Vert\left|  g\right|  _{\mu}^{\ast}(\cdot)\chi
_{\lbrack0,t)}(\cdot)\Vert_{\bar{X}}.
\]
In fact,
\[
\int_{0}^{r}\left|  f\right|  _{\mu}^{\ast}(s)\chi_{\lbrack0,t)}(s)ds=\int
_{0}^{\min\{t,r\}}\left|  f\right|  _{\mu}^{\ast}(s)\leq\int_{0}^{\min
\{t,r\}}\left|  g\right|  _{\mu}^{\ast}(s)ds=\int_{0}^{r}\left|  g\right|
_{\mu}^{\ast}(s)\chi_{\lbrack0,t)}(s)ds,
\]
and we conclude using the majorization principle above.
\end{remark}

The \textbf{fundamental function\ }of $\bar{X}$ is defined by
\[
\phi_{\bar{X}}(s)=\left\|  \chi_{\left[  0,s\right]  }\right\|  _{\bar{X}%
},\text{ \ }0\leq s\leq1,
\]
We can assume without loss of generality that $\phi_{\bar{X}}$ is concave (cf.
\cite{BS}). Moreover, for all $s\in(0,1)$ we have%
\[
\phi_{\bar{X}^{\prime}}(s)\phi_{\bar{X}}(s)=s.
\]
For example, if $1\leq p<\infty,1\leq q\leq\infty,$ and we let $\bar{X}=L^{p}$
or $\bar{X}=L^{p,q}$ (Lorentz space), then $\phi_{L^{p}}(t)=\phi_{L^{p,q}%
}(t)=t^{1/p},$ moreover, $\phi_{L^{\infty}}(t)\equiv1.$ If $N$ is a Young's
function, then the fundamental function of the Orlicz space $\bar{X}=L_{N}$ is
given by $\phi_{L_{N}}(t)=1/N^{-1}(1/t).$

The Lorentz $\Lambda(\bar{X})$ space and the Marcinkiewicz space $M(\bar{X})$
associated with $\bar{X}$ are defined by the quasi-norms
\[
\left\|  f\right\|  _{M(\bar{X})}=\sup_{t}\left|  f\right|  _{\mu}^{\ast
}(t)\phi_{\bar{X}}(t),\text{ \ \ \ \ \ }\left\|  f\right\|  _{\Lambda(\bar
{X})}=\int_{0}^{1}\left|  f\right|  _{\mu}^{\ast}(t)d\phi_{\bar{X}}(t).
\]
Notice that
\[
\phi_{M(\bar{X})}(t)=\phi_{\Lambda(\bar{X})}(t)=\phi_{\bar{X}}(t),
\]
and, moreover,%
\[
\Lambda(\bar{X})\subset\bar{X}\subset M(\bar{X}).\label{tango}%
\]

\subsection{Boyd indices and extrapolation spaces}

The Hardy operators are defined by\footnote{where if $a=0$ we simply let $Q:=$
$Q_{0}$}
\[
Pf(t)=\frac{1}{t}\int_{0}^{t}f(s)ds;\text{ \ \ \ }Q_{a}f(t)=\frac{1}{t^{a}%
}\int_{t}^{1}s^{a}f(s)\frac{ds}{s}\text{ \ (}0\leq a<1).
\]
The boundedness of the Hardy operators on a r.i. space $\bar{X}$ can be
formulated in terms of conditions on the so called \textbf{Boyd indices}%
\footnote{Introduced by D.W. Boyd in \cite{boyd}.}
\[
\bar{\alpha}_{\bar{X}}=\inf\limits_{r>1}\dfrac{\ln h_{\bar{X}}(r)}{\ln
r}\text{ \ \ and \ \ }\underline{\alpha}_{\bar{X}}=\sup\limits_{r<1}\dfrac{\ln
h_{\bar{X}}(r)}{\ln r},
\]
where $h_{\bar{X}}(r)$ denotes the norm of the compression/dilation operator
$E_{s}$ on $\bar{X}$, defined for $s>0,$ by
\begin{equation}
E_{r}f(t)=\left\{
\begin{array}
[c]{ll}%
f^{\ast}(\frac{t}{r}) & 0<t<r,\\
0 & r\leq t.
\end{array}
\right.  \label{verarriba}%
\end{equation}
The operator $E_{s}$ is bounded on every r.i. space $\bar{X},$ moreover,
\[
h_{X}(r)\leq\max\{1,r\},\text{ for all }s>0.
\]
For example, if $\bar{X}=L^{p}$, then $\overline{\alpha}_{L^{p}}%
=\underline{\alpha}_{L^{p}}=\frac{1}{p}.$

We have the following well known fact (cf. \cite{boyd}):
\begin{equation}%
\begin{array}
[c]{c}%
P\text{ is bounded on }\bar{X}\text{ }\Leftrightarrow\overline{\alpha}%
_{\bar{X}}<1,\\
Q_{a}\text{ is bounded on }\bar{X}\text{ }\Leftrightarrow\underline{\alpha
}_{\bar{X}}>a.
\end{array}
\label{alcance}%
\end{equation}
Moreover,
\begin{equation}
\left\|  Q_{a}\right\|  :=\left\|  Q_{a}\right\|  _{\bar{X}\rightarrow\bar{X}%
}\leq\int_{1}^{\infty}h_{\bar{X}}(\frac{1}{s})s^{\frac{1}{a}-1}ds<\infty.
\label{norma}%
\end{equation}

The following extrapolation spaces, introduced by Fiorenza \cite{fio} and
Fiorenza and Karadzhov \cite{fioka} in the special case of $L^{p}$
spaces\footnote{For a discussion of the extrapolation properties of a more
general class of spaces we refer to \cite{astly}.}, will play an important
role in this paper.

\begin{definition}
\label{deffiorenz}Let $\bar{X}$ be a r.i. space, and $k\in\mathbb{N}$. We let
$\bar{X}_{k,\log}$ be the r.i. space defined by%
\[
\bar{X}_{k,\log}=\left\{  f:\left\|  f\right\|  _{\bar{X}_{k,\log}}:=\int
_{0}^{1}\left\|  \left|  f\right|  ^{\ast}(s)\chi_{\lbrack0,t)}(s)\right\|
_{\bar{X}}\frac{dt}{t\left(  \ln\frac{1}{t}\right)  ^{1-k/2}}<\infty\right\}
.
\]
\end{definition}

It can be easily verified that%
\[
\left\|  f\right\|  _{\bar{X}_{k,\log}}\approx\int_{0}^{1}\left\|  \left|
f\right|  ^{\ast}(s)\chi_{\lbrack0,t)}(s)\right\|  _{\bar{X}}\frac
{dt}{t\left(  1+\ln\frac{1}{t}\right)  ^{1-k/2}}.
\]

We now briefly indicate how the $\bar{X}_{k,\log}$ spaces can be identified
with real interpolation/extrapolation spaces of the form (cf. \cite{bl})%
\[
(\bar{X},L^{\infty})_{w_{k},1}=\left\{  f:\left\|  f\right\|  _{(\bar
{X},L^{\infty})_{w_{k},1}}=\int_{0}^{1}K(t,f;\bar{X},L^{\infty})w_{k}%
(t)dt<\infty\right\}  ,
\]
where the $K-$functional, $K(t,f;\bar{X},L^{\infty}),$ is defined (cf.
\cite{bl}) by%
\[
K(t,f;\bar{X},L^{\infty})=\inf_{f=f_{0}+f_{1}}\{\left\|  f_{0}\right\|
_{\bar{X}}+t\left\|  f_{1}\right\|  _{L^{\infty}}\},
\]
and%
\[
w_{k}(t)=\frac{\left(  \phi_{\bar{X}}^{-1}(t)\right)  ^{\prime}}{\phi_{\bar
{X}}^{-1}(t)\left(  1+\ln(\frac{1}{\phi_{\bar{X}}^{-1}(t)})\right)  ^{1-k/2}%
}.
\]
This identification follows readily from the well known formula
(cf.\cite{mifosil})%
\[
K(t,f;\bar{X},L^{\infty})\approx\left\|  \left|  f\right|  ^{\ast}%
\chi_{(0,\phi_{\bar{X}}^{-1}(t))}\right\|  _{\bar{X}}.
\]
For example, if $\bar{X}=L^{p},$ and $k=1,$ then%
\[
(L^{p},L^{\infty})_{w_{1},1}=L_{(p,p^{\prime}}.
\]
This characterization simplifies a number of calculations with these spaces.

\begin{proposition}
\label{indice}Let $\bar{X}$ be a r.i. space on $\Omega$, and $k\in\mathbb{N}$. Then,

(i)%
\begin{equation}
\bar{X}_{k+1,\log}\subset\bar{X}_{k,\log}\subset\bar{X}. \label{inclu}%
\end{equation}

(ii) If $\overline{\alpha}_{\bar{X}}<1,$ then $\overline{\alpha}_{\bar
{X}_{k,\log}}<1.$

(ii) If for some $r>0,$ we have $\underline{\alpha}_{\bar{X}}>r\Rightarrow
\underline{\alpha}_{\bar{X}_{k,\log}}>r.$
\end{proposition}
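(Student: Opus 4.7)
The plan is to handle the three parts in order, using (i) as a tool for (ii) and (iii).

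For (i), the inclusion $\bar{X}_{k+1,\log} \subset \bar{X}_{k,\log}$ follows from a pointwise comparison of weights: the ratio of the $(k+1)$-weight to the $k$-weight is $(1+\ln 1/t)^{1/2} \ge 1$ on $(0,1)$, so integrating the nonnegative quantity $\|f^*\chi_{(0,t)}\|_{\bar{X}}$ against the larger weight yields the larger norm. For the second inclusion $\bar{X}_{k,\log} \subset \bar{X}$, I would restrict the defining integral to $t \in [1/2, 1]$ (where the weight is uniformly bounded above and below) and use monotonicity of $t \mapsto \|f^*\chi_{(0, t)}\|_{\bar{X}}$ to get $\|f\|_{\bar{X}_{k,\log}} \gtrsim \|f^*\chi_{(0, 1/2)}\|_{\bar{X}}$. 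A dilation argument then closes the gap: since $E_2(f^*\chi_{(0, 1/2)})(s) = f^*(s/2) \ge f^*(s)$ on $(0,1)$, we get $\|f\|_{\bar{X}} = \|f^*\|_{\bar{X}} \le h_{\bar{X}}(2)\,\|f^*\chi_{(0, 1/2)}\|_{\bar{X}} \le 2\,\|f^*\chi_{(0, 1/2)}\|_{\bar{X}}$.

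For (ii), by (\ref{alcance}) it suffices to show the Hardy operator $P$ is bounded on $\bar{X}_{k,\log}$. The key observation is the \emph{localizability} of $P$: since $(Pf)^*(s) \le f^{**}(s) = P(f^*)(s)$, and for $s \le t$ the value $P(f^*)(s)$ depends only on $f^*$ on $(0, s) \subset (0, t)$, we have $P(f^*)(s)\,\chi_{(0, t)}(s) = P(f^*\chi_{(0, t)})(s)\,\chi_{(0, t)}(s)$. Hence
\[
\|(Pf)^*\chi_{(0, t)}\|_{\bar{X}} \le \|P(f^*\chi_{(0, t)})\|_{\bar{X}} \le \|P\|_{\bar{X}}\,\|f^*\chi_{(0, t)}\|_{\bar{X}},
\]
and integrating against the weight yields $\|Pf\|_{\bar{X}_{k,\log}} \le \|P\|_{\bar{X}}\,\|f\|_{\bar{X}_{k,\log}}$.

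For (iii), the operator $Q_a$ lacks such clean localization, so I would estimate the dilation norm directly. Using $(E_r f)^*(s) = f^*(s/r)\chi_{(0, \min(1, r))}(s)$ and the change of variable $v = t/r$ in the integral defining $\|E_r f\|_{\bar{X}_{k,\log}}$, together with the identity $f^*(\cdot/r)\chi_{(0, t)} = E_r(f^*\chi_{(0, t/r)})$ and $\|E_r g\|_{\bar{X}} \le h_{\bar{X}}(r)\|g\|_{\bar{X}}$, the problem reduces to comparing the weight $(1+\ln 1/(rv))^{1-k/2}$ with $(1+\ln 1/v)^{1-k/2}$. This comparison is the main obstacle: for $k \le 2$ direct monotonicity of the weight in its second argument suffices, but for $k \ge 3$ one must expand $(a+b)^{k/2-1} \le C_k(a^{k/2-1} + b^{k/2-1})$ and invoke (i) to absorb the resulting auxiliary term $\int_0^1 \|f^*\chi_{(0, v)}\|_{\bar{X}}\,\frac{dv}{v} = \|f\|_{\bar{X}_{2, \log}} \le C_k\,\|f\|_{\bar{X}_{k,\log}}$. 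The net outcome is a bound $h_{\bar{X}_{k,\log}}(r) \le C_k\,h_{\bar{X}}(r)(1+|\ln r|)^{N_k}$ for some $N_k > 0$; since $\ln(1+|\ln r|)/\ln r \to 0$ as $r \to 0^+$, taking the limit of $\ln h_{\bar{X}_{k,\log}}(r)/\ln r$ gives $\underline{\alpha}_{\bar{X}_{k,\log}} \ge \underline{\alpha}_{\bar{X}}$, establishing (iii).
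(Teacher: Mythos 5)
Your proof is correct, and for parts (i) and (ii) it takes a genuinely more elementary route than the paper's. The paper leans on the identification $\bar{X}_{k,\log}=(\bar{X},L^{\infty})_{w_{k},1}$ as a real interpolation space and then interpolates: the second inclusion in (i) comes from $I\colon\bar{X}\to\bar{X}$ together with $I\colon L^{\infty}\to\bar{X}$, and (ii) from $P\colon\bar{X}\to\bar{X}$ together with $P\colon L^{\infty}\to L^{\infty}$. You bypass the interpolation machinery entirely and argue directly from the defining integral: a pointwise weight comparison $(1+\ln\frac{1}{t})^{1/2}\geq 1$ for the first inclusion in (i); a restriction of the $t$-integral to $[1/2,1]$ plus a dilation $f^{\ast}(s)\leq f^{\ast}(s/2)=E_{2}(f^{\ast}\chi_{(0,1/2)})(s)$ for the second; and, for (ii), the localization identity $P(f^{\ast})(s)\chi_{(0,t)}(s)=P(f^{\ast}\chi_{(0,t)})(s)\chi_{(0,t)}(s)$, valid because $Pg(s)$ only sees $g$ on $(0,s)$. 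Your version is self-contained and avoids the $K$-functional formula; the paper's is shorter once the interpolation characterization is available and disposes of (i) and (ii) by the same one-line pattern. For (iii) both proofs estimate $h_{\bar{X}_{k,\log}}(r)$ directly, and both end with $h_{\bar{X}_{k,\log}}(r)\leq C_{k}\,h_{\bar{X}}(r)\,(1+\ln\frac{1}{r})^{N_{k}}$, which is what the Boyd-index limit needs. One small gap in your sketch: after the substitution $v=t/r$ you only discuss $v<1$. For $v>1$ (i.e.\ $t>r$) the cutoff $\chi_{(0,t)}$ is inactive on the support of $(E_{r}f)^{\ast}$, so one must separately bound
\[
\|E_{r}f\|_{\bar{X}}\int_{r}^{1}\frac{dt}{t\left(1+\ln\frac{1}{t}\right)^{1-k/2}}\leq h_{\bar{X}}(r)\,\frac{2}{k}\left(1+\ln\frac{1}{r}\right)^{k/2}\|f\|_{\bar{X}}
\]
and then use (i) to replace $\|f\|_{\bar{X}}$ by $\|f\|_{\bar{X}_{k,\log}}$; this is the paper's $B(r)$ term. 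It is routine, but until it is supplied the asserted dilation bound is not fully justified.
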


\begin{proof}
(i) The first inclusion is obvious. To prove the second inclusion we observe
that the identity operator maps%
\[
I:\bar{X}\rightarrow\bar{X},\text{ and }I:L^{\infty}\rightarrow\bar{X},
\]
thus, by interpolation,%
\[
I:\bar{X}_{k,\log}\rightarrow(\bar{X},\bar{X})_{w_{k},1}=\bar{X}.
\]

(ii) By (\ref{alcance}) we need to prove that if $\overline{\alpha}_{\bar{X}%
}<1,$ then $P:\bar{X}_{k,\log}\rightarrow\bar{X}_{k,\log}.$ But $P$ is bounded
on $L^{\infty},$ consequently the result follows interpolating the estimates%
\[
P:\bar{X}\rightarrow\bar{X},\text{ and }P:L^{\infty}\rightarrow L^{\infty}.
\]
(iii) The proof will be by direct estimation of the norm of the
compression/dilation operator $E_{r}f$ (cf. (\ref{verarriba}) above). Let
$0<r<1,$ then
\begin{align*}
\left\|  E_{r}f\right\|  _{\bar{X}_{k,\log}}  &  \leq c\int_{0}^{1}\left\|
\left|  f\right|  ^{\ast}\left(  \frac{s}{r}\right)  \chi_{\lbrack0,r)}%
(s)\chi_{\lbrack0,t)}(s)\right\|  _{\bar{X}}\frac{dt}{t\left(  1+\ln\frac
{1}{t}\right)  ^{1-k/2}}\\
&  =c\int_{0}^{1}\left\|  \left|  f\right|  ^{\ast}\left(  \frac{s}{r}\right)
\chi_{\lbrack0,\min(t,r))}(s)\right\|  _{\bar{X}}\frac{dt}{t\left(  1+\ln
\frac{1}{t}\right)  ^{1-k/2}}\\
&  \leq c\left(  \int_{0}^{r}\left\|  \left|  f\right|  ^{\ast}\left(
\frac{s}{r}\right)  \chi_{\lbrack0,t)}(s)\right\|  _{\bar{X}}\frac
{dt}{t\left(  1+\ln\frac{1}{t}\right)  ^{1-k/2}}+\int_{r}^{1}\left\|  \left|
f\right|  ^{\ast}\left(  \frac{s}{r}\right)  \chi_{\lbrack0,r)}(s)\right\|
_{\bar{X}}\frac{dt}{t\left(  1+\ln\frac{1}{t}\right)  ^{1-k/2}}\right) \\
&  =c(A(r)+B(r)).
\end{align*}
We estimate each of these terms as follows
\begin{align}
A(r)  &  =\int_{0}^{r}\left\|  \left|  f\right|  ^{\ast}\left(  \frac{s}%
{r}\right)  \chi_{\lbrack0,t/r)}(\frac{s}{r})\right\|  _{\bar{X}}\frac
{dt}{t\left(  1+\ln\frac{1}{t}\right)  ^{1-k/2}}\label{l1}\\
&  \leq h_{\bar{X}}(r)\int_{0}^{r}\left\|  \left|  f\right|  ^{\ast}%
(s)\chi_{\lbrack0,t/r)}(s)\right\|  _{\bar{X}}\frac{dt}{t\left(  1+\ln\frac
{1}{t}\right)  ^{1-k/2}}\nonumber\\
&  =h_{\bar{X}}(r)\int_{0}^{1}\left\|  \left|  f\right|  ^{\ast}%
(s)\chi_{\lbrack0,u)}(s)\right\|  _{\bar{X}}\frac{du}{u\left(  1+\ln\frac
{1}{ur}\right)  ^{1-k/2}}\nonumber\\
&  \leq h_{\bar{X}}(r)\sup_{0<u<1}\left(  \frac{1+\ln\frac{1}{u}}{1+\ln
\frac{1}{ur}}\right)  ^{1-k/2}\int_{0}^{1}\left\|  \left|  f\right|  ^{\ast
}(s)\chi_{\lbrack0,u)}(s)\right\|  _{\bar{X}}\frac{du}{u\left(  1+\ln\frac
{1}{u}\right)  ^{1-k/2}}\nonumber\\
&  =h_{\bar{X}}(r)\sup_{0<u<1}\left(  \frac{1+\ln\frac{1}{u}}{1+\ln\frac
{1}{ur}}\right)  ^{1-k/2}\left\|  f\right\|  _{\bar{X}_{k,\log}}.\nonumber
\end{align}
Now, the term containing the supremum can be easily computed. Indeed, by
direct differentiation one sees that the function $\frac{1+\ln\frac{1}{u}%
}{1+\ln\frac{1}{ur}}$ is decreasing, therefore $\left(  \frac{1+\ln\frac{1}%
{u}}{1+\ln\frac{1}{ur}}\right)  ^{1-k/2}$ decreases (resp. increases) when
$1-k/2>0$ (resp. $1-k/2\leq0).$ It follows that
\begin{equation}
\sup_{0<u<1}\left(  \frac{1+\ln\frac{1}{u}}{1+\ln\frac{1}{ur}}\right)
^{1-k/2}=\left\{
\begin{array}
[c]{ll}%
1 & \text{if }k=1,2;\\
\left(  1+\ln\frac{1}{r}\right)  ^{k/2-1} & \text{if }k\geq3.
\end{array}
\right.  \label{l2}%
\end{equation}
We estimate $B(r):$%
\begin{align}
B(r)  &  \leq\left\|  E_{r}f\right\|  _{\bar{X}}\int_{r}^{1}\frac{dt}{t\left(
1+\ln\frac{1}{t}\right)  ^{1-k/2}}\label{l3}\\
&  \leq\left\|  E_{r}f\right\|  _{\bar{X}}\frac{2}{k}\left(  1+\ln\frac{1}%
{r}\right)  ^{k/2}\nonumber\\
&  \leq h_{\bar{X}}(r)\frac{2}{k}\left(  1+\ln\frac{1}{r}\right)
^{k/2}\left\|  f\right\|  _{\bar{X}}\nonumber\\
&  \leq\bar{c}(k)\left(  1+\ln\frac{1}{r}\right)  ^{k/2}\left\|  f\right\|
_{\bar{X}_{k,\log}}\text{ \ (by (\ref{inclu})).}\nonumber
\end{align}
Combining (\ref{l1}), (\ref{l2}) and (\ref{l3}), we see that there exists a
constant $c=c(k),$ such that%
\[
\left\|  E_{r}f\right\|  _{\bar{X}_{k,\log}}\leq ch_{\bar{X}}(r)\left(
1+\ln\frac{1}{r}\right)  ^{k/2}\left\|  f\right\|  _{\bar{X}_{k,\log}}\text{.}%
\]
Therefore,%
\[
h_{\bar{X}_{k,\log}}(r)\leq ch_{\bar{X}}(r)\left(  1+\ln\frac{1}{r}\right)
^{k/2}.
\]
Thus, for $0<r<1,$ we have%
\[
\dfrac{\ln h_{\bar{X}_{k,\log}}(r)}{\ln r}\geq\frac{\ln h_{\bar{X}}(r)}{\ln
r}+\frac{\ln c\left(  1+\ln\frac{1}{r}\right)  ^{k/2}}{\ln r}.
\]
It follows that%
\begin{align*}
\underline{\alpha}_{\bar{X}_{k,\log}}  &  =\sup_{0<r<1}\dfrac{\ln h_{\bar
{X}_{k,\log}}(r)}{\ln r}\\
&  =\lim_{r\mapsto0}\dfrac{\ln h_{\bar{X}_{k,\log}}(r)}{\ln r}\\
&  \geq\lim_{r\mapsto0}\left\{  \frac{\ln h_{\bar{X}}(r)}{\ln r}+\frac{\ln
c\left(  1+\ln\frac{1}{r}\right)  ^{k/2}}{\ln r}\right\} \\
&  =\underline{\alpha}_{\bar{X}}+\lim_{r\rightarrow0}\frac{\ln c\left(
1+\ln\frac{1}{r}\right)  ^{k/2}}{\ln r}\\
&  =\underline{\alpha}_{\bar{X}},
\end{align*}

as we wished to show.
\end{proof}

\section{The main Theorem\label{thema}}

In this section we always work with $\left(  \Omega,d,\mu\right)  $
probability metric spaces, as described in the previous section, and will
always let $J$ denote an isoperimetric estimator of $\left(  \Omega
,d,\mu\right)  .$

\begin{theorem}
\label{main}Let $\bar{X}$ be a rearrangement invariant space$,$ and suppose
that $G:(0,1/2)\rightarrow(0,\infty),$ satisfies
\[
\int_{0}^{\frac{1}{2}}\frac{t}{J(t)}G(t)dt<\infty.
\]
Then:

\begin{enumerate}
\item  If $Q$ is bounded on $\bar{X},$ then the following Sobolev inequality
holds: $\forall f\in Lip(\Omega),$%
\begin{equation}
\int_{0}^{\frac{1}{2}}\left\|  \left(  f^{\ast}(\cdot)-f^{\ast}(t)\right)
\chi_{\lbrack0,t)}(\cdot)\right\|  _{\bar{X}}G(t)dt\leq\left\|  Q\right\|
_{\bar{X}\rightarrow\bar{X}}\left\|  \left|  \nabla f\right|  ^{\ast}\right\|
_{\bar{X}}\int_{0}^{\frac{1}{2}}\frac{t}{J(t)}G(t)dt. \label{aqaq}%
\end{equation}

\item  If $P$ is bounded on $\bar{X},$ then the following Sobolev inequality
holds: $\forall f\in Lip(\Omega),$%
\begin{equation}
\int_{0}^{\frac{1}{2}}\left\|  (f^{\ast\ast}(\cdot)-f^{\ast}(\cdot
))\chi_{\lbrack0,t)}(\cdot)\right\|  _{\bar{X}}G(t)dt\leq\left\|  P\right\|
_{\bar{X}\rightarrow\bar{X}}\left\|  \left|  \nabla f\right|  ^{\ast}\right\|
_{\bar{X}}\int_{0}^{\frac{1}{2}}\frac{t}{J(t)}G(t)dt. \label{aqaq1}%
\end{equation}
\end{enumerate}
\end{theorem}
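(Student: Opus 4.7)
The plan is to establish, for each fixed $t\in(0,1/2)$, two pointwise-in-$t$ estimates
\begin{equation*}
\|(f^{*}(\cdot)-f^{*}(t))\chi_{[0,t)}\|_{\bar{X}}\leq\tfrac{t}{J(t)}\,\|Q\|_{\bar{X}\to\bar{X}}\,\||\nabla f|^{*}\|_{\bar{X}},
\end{equation*}
\begin{equation*}
\|(f^{**}(\cdot)-f^{*}(\cdot))\chi_{[0,t)}\|_{\bar{X}}\leq\tfrac{t}{J(t)}\,\|P\|_{\bar{X}\to\bar{X}}\,\||\nabla f|^{*}\|_{\bar{X}},
\end{equation*}
and then multiply by $G(t)$ and integrate on $(0,1/2)$ to deduce (\ref{aqaq}) and (\ref{aqaq1}), respectively. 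Common to both parts is one structural observation: since $J$ is concave with $J(0)=0$, the ratio $J(t)/t$ is nonincreasing on $(0,1/2)$, hence $s\mapsto s/J(s)$ is nondecreasing there; for $s\leq t$ the factor $s/I(s)\leq s/J(s)\leq t/J(t)$ can therefore be pulled outside norms and integrals in $s$.

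For part (2), the oscillation form of Theorem \ref{Iso}, applied with $I\geq J$, gives
\begin{equation*}
(f^{**}(s)-f^{*}(s))\,\tfrac{J(s)}{s}\leq(f^{**}(s)-f^{*}(s))\,\tfrac{I(s)}{s}\leq P|\nabla f|^{*}(s),
\end{equation*}
so $f^{**}(s)-f^{*}(s)\leq(s/J(s))\,P|\nabla f|^{*}(s)$. Restricting to $s\in(0,t)$ and using the monotonicity just noted majorizes the right-hand side by $(t/J(t))\,P|\nabla f|^{*}(s)\chi_{(0,t)}(s)$; taking the $\bar{X}$-norm in $s$, dropping the support cutoff (which only decreases the norm of a nonnegative function), and invoking boundedness of $P$ on $\bar{X}$ produces the desired pointwise bound. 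Integration against $G$ then yields (\ref{aqaq1}).

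For part (1), I would apply the P\'{o}lya--Szeg\"{o} form of Theorem \ref{Iso} to the auxiliary function $\tilde h(u):=(-f^{*})'(u)I(u)\geq 0$. The inequality $\int_{0}^{r}\tilde h^{*}(s)\,ds\leq\int_{0}^{r}|\nabla f|^{*}(s)\,ds$, combined with the majorization principle (\ref{hardy}) and rearrangement invariance, gives $\|\tilde h\|_{\bar{X}}=\|\tilde h^{*}\|_{\bar{X}}\leq\||\nabla f|^{*}\|_{\bar{X}}$. For $0\leq s<t$ I write
\begin{equation*}
f^{*}(s)-f^{*}(t)=\int_{s}^{t}\tilde h(u)\,\tfrac{du}{I(u)}\leq\tfrac{t}{J(t)}\int_{s}^{1}\tilde h(u)\,\tfrac{du}{u}=\tfrac{t}{J(t)}\,Q\tilde h(s),
\end{equation*}
where the middle step uses $u/I(u)\leq t/J(t)$ on $(s,t)$ together with the nonnegativity of $\tilde h$ to extend the upper limit from $t$ to $1$. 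Taking the $\bar{X}$-norm in $s$ and applying the assumed boundedness of $Q$ on $\bar{X}$ (equivalent by (\ref{alcance}) to $\underline{\alpha}_{\bar{X}}>0$) delivers the pointwise-in-$t$ bound, and integration against $G$ yields (\ref{aqaq}).

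The main technical delicacy lies in part (1): since $\tilde h$ is only nonnegative and not monotone, the passage from the P\'{o}lya--Szeg\"{o} integral inequality to a genuine $\bar{X}$-norm bound on $\tilde h$ requires rearrangement invariance, and the subsequent control of $\|Q\tilde h\|_{\bar{X}}$ relies essentially on the boundedness hypothesis for $Q$. The $G(t)$-integration is then routine in view of the hypothesis $\int_{0}^{1/2}(t/J(t))G(t)\,dt<\infty$.
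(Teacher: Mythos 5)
Your proposal is correct and follows essentially the same route as the paper: part (2) uses the oscillation inequality (\ref{reod00}), boundedness of $P$, and monotonicity of $t/J(t)$; part (1) uses the fundamental theorem of calculus plus the P\'{o}lya--Szeg\"{o} inequality (\ref{aa}), boundedness of $Q$, and the same monotonicity. The only (cosmetic) difference is in part (1): you pull the monotone factor $u/I(u)\le t/J(t)$ out of the integral and extend the upper limit \emph{before} taking $\bar X$-norms and invoking $\|Q\|_{\bar X\to\bar X}$, whereas the paper takes the $\bar X$-norm of $\int_s^1 J(z)\tfrac{z}{J(z)}(-f^*)'(z)\chi_{[0,t)}(z)\tfrac{dz}{z}$, applies $Q$-boundedness first, then pulls out $z/J(z)\le t/J(t)$ and finishes with the cut-off majorization of Remark~\ref{Hardy1}; both yield the identical bound.
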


\begin{proof}
Part 2: To complete the details of the proof outlined in the introduction,
simply note that the inequality (\ref{larusa}) above, follows directly from
(\ref{reod00}). The proof of first \ part of the theorem requires an extra
argument. Suppose that $f\in Lip(\Omega),$ then $f^{\ast}$ is locally
absolutely continuous and, since $f^{\ast}$ is decreasing, it follows that
$\left(  -f^{\ast}\right)  ^{^{\prime}}\geq0.$ By the fundamental theorem of
calculus we can write
\[
f^{\ast}(s)-f^{\ast}(t)=\int_{s}^{t}\left(  -f^{\ast}\right)  ^{^{\prime}%
}(z)dz,\text{ \ \ }0<s<t<\frac{1}{2}.
\]
Consequently,
\begin{align*}
\left\|  \left(  f^{\ast}(s)-f^{\ast}(t)\right)  \chi_{\lbrack0,t)}%
(s)\right\|  _{\bar{X}}  &  =\left\|  \int_{s}^{t}\left(  -f^{\ast}\right)
^{^{\prime}}(z)dz\right\|  _{\bar{X}}\\
&  =\left\|  \left(  \int_{s}^{1}J(z)\frac{z}{J(z)}\left(  -f^{\ast}\right)
^{^{\prime}}(z)\chi_{\lbrack0,t)}(z)\frac{dz}{z}\right)  \right\|  _{\bar{X}%
}\\
&  \leq\left\|  Q\right\|  _{\bar{X}\rightarrow\bar{X}}\left\|  \left(
\frac{z}{J(z)}\right)  J(z)\left(  -f^{\ast}\right)  ^{^{\prime}}%
(z)\chi_{\lbrack0,t)}(z)\right\|  _{\bar{X}}\text{ \ \ (since }Q\text{ is
bounded on }\bar{X})\\
&  =\left\|  Q\right\|  _{\bar{X}\rightarrow\bar{X}}\left(  \frac{t}%
{J(t)}\right)  \left\|  \left(  J(\cdot)\left(  -f^{\ast}\right)  ^{^{\prime}%
}(\cdot)\chi_{\lbrack0,t)}(\cdot)\right)  ^{\ast}\right\|  _{\bar{X}}\text{
\ (since }\frac{z}{J(z)}\uparrow\text{)}\\
&  \leq\left\|  Q\right\|  _{\bar{X}\rightarrow\bar{X}}\frac{t}{J(t)}\left\|
\left|  \nabla f\right|  ^{\ast}(z)\chi_{\lbrack0,t]}(z)\right\|  _{\bar{X}%
}\text{ \ \ (by (\ref{aa}) and Remark \ref{Hardy1})}\\
&  \leq\left\|  Q\right\|  _{\bar{X}\rightarrow\bar{X}}\frac{t}{J(t)}\left\|
\left|  \nabla f\right|  ^{\ast}\right\|  _{\bar{X}}.
\end{align*}
Thus,%
\[
\int_{0}^{\frac{1}{2}}\left\|  \left(  f^{\ast}(\cdot)-f^{\ast}(t)\right)
\chi_{\lbrack0,t)}(\cdot)\right\|  _{\bar{X}}G(t)dt\leq\left\|  Q\right\|
_{\bar{X}\rightarrow\bar{X}}\left\|  \left|  \nabla f\right|  ^{\ast}\right\|
_{\bar{X}}\int_{0}^{\frac{1}{2}}\frac{t}{J(t)}G(t)dt.
\]
\end{proof}

\begin{remark}
Since $f\in Lip(\Omega)\Rightarrow$ $\left|  f\right|  \in Lip(\Omega)$ with
$\left|  \nabla\left|  f\right|  \right|  \leq\left|  \nabla f\right|  ,$ the
inequalities (\ref{aqaq}) and (\ref{aqaq1}) also hold for $\left|  f\right|  .$
\end{remark}

Let us also note the following converse to Theorem \ref{main}

\begin{corollary}
\label{ma1}Let $r\in(0,1]$ and suppose that suppose that
\begin{equation}
\int_{0}^{r}\left\|  \left(  f^{\ast}(\cdot)-f^{\ast}(t)\right)  \chi
_{\lbrack0,t)}(\cdot)\right\|  _{\bar{X}}G(t)dt\leq C(X)\left\|  \left|
\nabla f\right|  ^{\ast}\right\|  _{\bar{X}},\label{habra}%
\end{equation}
holds for all r.i. spaces $X$ away from $L^{\infty}.$ Then, for all Borel sets
$A\subset\Omega,$ with $\mu(A)\leq r,$ we have
\begin{equation}
\mu(A)\int_{\mu(A)}^{r}G(t)dt\leq CP(A;\Omega),\label{estima}%
\end{equation}
and consequently,%
\[
\frac{t}{I(t)}\int_{t}^{r}G(t)dt\leq C,\text{ for all }t\in(0,r).
\]
\end{corollary}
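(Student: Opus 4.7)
The plan is to derive the set-level inequality \eqref{estima} by testing the hypothesis \eqref{habra} against a family of Lipschitz approximations of the characteristic function of a Borel set $A\subset\Omega$ with $\mu(A)\leq r$, and then to recover the stated bound on $t/I(t)$ by minimizing $P(A;\Omega)$ among sets of prescribed measure.

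To implement this, I would fix such a set $A$ and, for each $\varepsilon>0$, form the Lipschitz cut-off
\[
f_{\varepsilon}(x)=\max\!\bigl(1-d(x,A)/\varepsilon,\,0\bigr),
\]
which equals $1$ on $A$, vanishes outside the open $\varepsilon$-neighborhood $A_{\varepsilon}$, and satisfies $|\nabla f_{\varepsilon}|\leq\varepsilon^{-1}\chi_{A_{\varepsilon}\setminus A}$. I would then apply \eqref{habra} with the choice $\bar{X}=L^{1}$, which lies in the admissible class because $\underline{\alpha}_{L^{1}}=1>0$, so $Q$ is bounded on $L^{1}$ and $L^{1}$ is ``away from $L^{\infty}$''.

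Next I would estimate the two sides of \eqref{habra} on $f_{\varepsilon}$. A direct computation with the distribution function $\mu_{f_{\varepsilon}}$ yields $f_{\varepsilon}^{\ast}(s)=1$ for $s<\mu(A)$ and $f_{\varepsilon}^{\ast}(t)=0$ for $t\geq\mu(A_{\varepsilon})$; consequently, for every $t\in[\mu(A_{\varepsilon}),r]$,
\[
\bigl(f_{\varepsilon}^{\ast}(s)-f_{\varepsilon}^{\ast}(t)\bigr)\chi_{[0,t)}(s)\geq\chi_{[0,\mu(A))}(s),
\]
which gives $\|(f_{\varepsilon}^{\ast}(\cdot)-f_{\varepsilon}^{\ast}(t))\chi_{[0,t)}(\cdot)\|_{L^{1}}\geq\mu(A)$ there. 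The left-hand side of \eqref{habra} is therefore bounded below by $\mu(A)\int_{\mu(A_{\varepsilon})}^{r}G(t)\,dt$, while the right-hand side satisfies $\||\nabla f_{\varepsilon}|^{\ast}\|_{L^{1}}=\int_{\Omega}|\nabla f_{\varepsilon}|\,d\mu\leq \varepsilon^{-1}(\mu(A_{\varepsilon})-\mu(A))$.

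Finally, I would let $\varepsilon\to 0^{+}$ along a sequence realizing the $\liminf$ in the definition of $P(A;\Omega)$. The right-hand side tends to $C\,P(A;\Omega)$; by monotone convergence (with $G\geq 0$), the lower bound on the left-hand side rises to $\mu(A)\int_{\mu(A)}^{r}G(t)\,dt$, yielding \eqref{estima}. The final assertion then follows by choosing, for each $t\in(0,r)$, a minimizing sequence of Borel sets $\{A_{n}\}$ with $\mu(A_{n})=t$ and $P(A_{n};\Omega)\to I(t)$, and dividing through by $I(t)$. The main technical nuisance I expect is ensuring the convergence $\mu(A_{\varepsilon})\downarrow\mu(A)$ needed to close the monotone convergence step — I would handle this by restricting a priori to sets with $\mu$-null topological boundary (or sets of finite perimeter with sufficiently regular boundary), which form a class still rich enough to realize $I(t)$ as an infimum and hence to propagate the bound to the isoperimetric profile.
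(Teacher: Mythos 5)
Your overall strategy is the same as the paper's: test the hypothesis \eqref{habra} with $\bar X = L^1$ against Lipschitz approximants of $\chi_A$ and pass to the limit. The difference is in how you obtain the approximating sequence. The paper simply invokes the Bobkov--Houdr\'e approximation lemma (their Memoir, cited as \cite{bobk}), which supplies a sequence $f_n\to\chi_A$ in $L^1$ with $\limsup_n\|\,|\nabla f_n|\,\|_{L^1}=P(A;\Omega)$, and then uses the Garsia--Rodemich lemma \cite{gar} to get $f_n^\ast\to\chi_{[0,\mu(A)]}$ at continuity points, after which Fatou closes the argument. You instead try to re-derive that approximation lemma from scratch via the explicit cut-off $f_\varepsilon(x)=\max(1-d(x,A)/\varepsilon,0)$. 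This is exactly the construction behind the cited result, so in spirit the two proofs coincide; what the citation buys is that all the annoying boundary bookkeeping is already done.

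There are two genuine gaps in your re-derivation that you only partly address. First, you write $|\nabla f_\varepsilon|\le \varepsilon^{-1}\chi_{A_\varepsilon\setminus A}$, but the modulus of the gradient can be as large as $\varepsilon^{-1}$ on $A\cap\partial A$ as well (only on the interior $A^\circ$ is $f_\varepsilon$ locally constant equal to $1$), so the correct a priori bound is $\int|\nabla f_\varepsilon|\,d\mu\le\varepsilon^{-1}\bigl(\mu(A_\varepsilon)-\mu(A^\circ)\bigr)$; if $\mu(A)>\mu(A^\circ)$ this blows up as $\varepsilon\to0$. Second, $A_\varepsilon\downarrow\overline A$ and hence $\mu(A_\varepsilon)\to\mu(\overline A)$, not $\mu(A)$. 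You flag the second issue and propose restricting to sets with $\mu$-null boundary, but that restriction would then need a separate argument showing that the infimum defining $I(t)$ is unchanged. A cleaner observation that closes the second gap (and you may as well record it): if $\mu(\overline A)>\mu(A)$ then $\mu(A_h)-\mu(A)\ge\mu(\overline A)-\mu(A)>0$ for every $h>0$, so $P(A;\Omega)=\infty$ and the inequality \eqref{estima} is vacuous; thus one may assume $\mu(\overline A)=\mu(A)$ from the start. This does not, however, dispose of the first gap (it gives $\mu(\overline A)=\mu(A)$, not $\mu(A^\circ)=\mu(A)$), so as written your estimate of $\|\,|\nabla f_\varepsilon|\,\|_{L^1}$ is not justified for general Borel $A$ of finite perimeter. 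The paper's route through \cite{bobk} avoids this entirely, which is the main reason to prefer it.
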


\begin{proof}
Our assumption implies that the inequality (\ref{habra}) holds for $X=L^{1}.$
Let $A$ be a Borel set with $\mu(A)\leq r$. We may assume without loss of
generality that $P(A;\Omega)<\infty.$ By \cite{bobk} we can select a sequence
$\{f_{n}\}_{n\in N}$ of Lip functions such that $f_{n}\underset{L^{1}%
}{\rightarrow}\chi_{A}$, and%
\[
P(A;\Omega)=\lim\sup_{n\rightarrow\infty}\left\|  \left|  \nabla f_{n}\right|
\right\|  _{L^{1}}.
\]
Therefore, by (\ref{habra}) applied to the sequence of $f_{n}^{\prime}s$
above, we obtain
\[
\lim\sup_{n\rightarrow\infty}\int_{0}^{r}\left(  \int_{0}^{t}\left(  \left(
f_{n}\right)  _{\mu}^{\ast}(s)-\left(  f_{n}\right)  _{\mu}^{\ast}(t)\right)
ds\right)  G(t)dt\leq CP(A;\Omega).
\]
It is known that $f_{n}\underset{L^{1}}{\rightarrow}\chi_{A}$ implies that
(cf. \cite[Lemma 2.1]{gar}):
\[
\left(  f_{n}\right)  _{\mu}^{\ast}(t)\rightarrow\left(  \chi_{A}\right)
_{\mu}^{\ast}(t)=\chi_{\lbrack0,\mu(A)]}(t)\text{ }\text{at all points of
continuity of }\left(  \chi_{A}\right)  _{\mu}^{\ast}.
\]
Consequently,%
\begin{align*}
\mu(A)\int_{\mu(A)}^{r}G(t)dt &  \leq\lim\sup_{n\rightarrow\infty}\int_{0}%
^{r}\left(  \int_{0}^{t}\left(  \left(  f_{n}\right)  _{\mu}^{\ast}(s)-\left(
f_{n}\right)  _{\mu}^{\ast}(t)\right)  ds\right)  G(t)dt\\
&  \leq CP(A;\Omega).
\end{align*}
\end{proof}

\begin{definition}
Let $J$ be an isoperimetric estimator of $\left(  \Omega,d,\mu\right)  $.\ The
\textbf{isoperimetric Hardy operator} $Q_{J}$ is defined by
\[
Q_{J}f(t):=\frac{J(t)}{t}\int_{t}^{\frac{1}{2}}f(z)\frac{dz}{J(z)}.
\]
\end{definition}

\begin{theorem}
\label{inclusion}Let $\bar{X}$ be a rearrangement invariant space$,$ and let
$J$ be an isoperimetric estimator. Let $G:(0,1)\rightarrow(0,\infty)$ be such
that
\[
\int_{0}^{\frac{1}{2}}\frac{t}{J(t)}G(t)dt<\infty.
\]
Suppose that the isoperimetric operator $Q_{J}$ is bounded on $\bar{X}.$ Let
$f\in Lip(\Omega)$ and let $med(f)$ be a median\footnote{Let $f\ $be a
measurable function, a real number $med(f)$ will be called a \textbf{median}
of $f$ if
\[
\mu\left\{  f\geq med(f)\right\}  \geq1/2\text{ \ and }\mu\left\{  f\leq
med(f)\right\}  \geq1/2.
\]
} of $f,$ then%
\[
\int_{0}^{\frac{1}{2}}\left\|  (f-med(f))^{\ast}(s)\chi_{\lbrack
0,t)}(s)\right\|  _{\bar{X}}G(t)dt\leq\left(  \left\|  Q\right\|  _{\bar
{X}\rightarrow\bar{X}}+\left\|  Q_{J}\right\|  _{\bar{X}\rightarrow\bar{X}%
}\right)  \left\|  \left|  \nabla f\right|  ^{\ast}\right\|  _{\bar{X}}%
\int_{0}^{\frac{1}{2}}\frac{t}{J(t)}G(t)dt.
\]
\end{theorem}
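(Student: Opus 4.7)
The plan is to reduce to \thmref{main} Part 1 by separating $\left(f-med(f)\right)^{\ast}(s)$ (for $s\in[0,t)$ with $t\leq 1/2$) into its oscillation about the tail value $\left(f-med(f)\right)^{\ast}(t)$ plus that constant value, and then to control the tail contribution via the isoperimetric Hardy operator $Q_{J}$. Setting $g:=f-med(f)$, we have $g\in Lip(\Omega)$, $\left|\nabla g\right|^{\ast}=\left|\nabla f\right|^{\ast}$, and the fact that $0$ is a median of $g$ gives $g^{\ast}(1/2)\leq 0$; combined with the absolute (hence ordinary) continuity of $g^{\ast}$ from \thmref{Iso}(3) and the observation that $g^{\ast}(s)\geq 0$ for $s<1/2$ (since $\mu\{g\geq 0\}\geq 1/2>s$ forces $\mu\{g>u\}>s$ for all $u<0$), this sharpens to $g^{\ast}(1/2)=0$, so $g^{\ast}\geq 0$ on $[0,1/2]$.

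Writing $g^{\ast}(s)=\left(g^{\ast}(s)-g^{\ast}(t)\right)+g^{\ast}(t)$ with both summands nonnegative on $[0,t)\subset[0,1/2)$, the triangle inequality yields
\[
\left\|g^{\ast}(\cdot)\chi_{[0,t)}\right\|_{\bar{X}}\leq\left\|\left(g^{\ast}(\cdot)-g^{\ast}(t)\right)\chi_{[0,t)}\right\|_{\bar{X}}+g^{\ast}(t)\,\phi_{\bar{X}}(t).
\]
Integrating against $G(t)\,dt$ on $(0,1/2)$, the oscillation piece $\int_{0}^{1/2}\left\|\left(g^{\ast}(\cdot)-g^{\ast}(t)\right)\chi_{[0,t)}\right\|_{\bar{X}}G(t)\,dt$ is controlled directly by \thmref{main} Part 1 applied to $g$ (using $\left|\nabla g\right|^{\ast}=\left|\nabla f\right|^{\ast}$), contributing the $\left\|Q\right\|_{\bar{X}\rightarrow\bar{X}}\left\|\left|\nabla f\right|^{\ast}\right\|_{\bar{X}}\int_{0}^{1/2}\frac{t}{J(t)}G(t)\,dt$ summand to the target bound.

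For the tail piece $\int_{0}^{1/2}g^{\ast}(t)\phi_{\bar{X}}(t)G(t)\,dt$, the key is a Maz'ya-type identity powered by $g^{\ast}(1/2)=0$: the fundamental theorem of calculus gives $g^{\ast}(s)=\int_{s}^{1/2}(-g^{\ast})^{\prime}(z)\,dz$ for $s\leq 1/2$, and multiplying and dividing the integrand by $J$ recognizes this as $g^{\ast}(s)=\frac{s}{J(s)}Q_{J}\!\left((-g^{\ast})^{\prime}J\right)(s)$. Combined with $g^{\ast}(t)\leq g^{\ast}(s)$ for $s\leq t$, one gets the pointwise bound $g^{\ast}(t)\chi_{[0,t)}(s)\leq\frac{s}{J(s)}Q_{J}\!\left((-g^{\ast})^{\prime}J\right)(s)\chi_{[0,t)}(s)$. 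Taking $\bar{X}$-norms, pulling out $\sup_{s\in[0,t)}s/J(s)=t/J(t)$, invoking boundedness of $Q_{J}$ on $\bar{X}$, and then using Hardy majorization from (\ref{aa}) and Remark \ref{Hardy1} (giving $\left\|(-g^{\ast})^{\prime}J\right\|_{\bar{X}}\leq\left\|\left|\nabla f\right|^{\ast}\right\|_{\bar{X}}$) yields $g^{\ast}(t)\phi_{\bar{X}}(t)\leq\frac{t}{J(t)}\left\|Q_{J}\right\|_{\bar{X}\rightarrow\bar{X}}\left\|\left|\nabla f\right|^{\ast}\right\|_{\bar{X}}$. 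Integrating against $G(t)\,dt$ produces the $\left\|Q_{J}\right\|$ summand; summing with the oscillation bound gives the stated inequality. The main obstacle is this Maz'ya step, specifically establishing $g^{\ast}(1/2)=0$: without it the natural Maz'ya bound leaves a boundary residue $\left|g^{\ast}(1/2)\right|\phi_{\bar{X}}(t)$ that is not obviously absorbable into the target, so the median hypothesis together with absolute continuity of the rearrangement is essential.
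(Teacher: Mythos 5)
Your proof is correct and follows essentially the same route as the paper's: you use the same decomposition of $\left\|g^{\ast}(\cdot)\chi_{[0,t)}\right\|_{\bar{X}}$ into an oscillation piece (handled by Theorem~\ref{main}, Part~1) plus a constant tail piece, and you control the tail in the same way, by recognizing the isoperimetric operator $Q_{J}$ applied to $J\,(-g^{\ast})^{\prime}$ and invoking the P\'{o}lya--Szeg\"{o} estimate (\ref{aa}). The two small differences in bookkeeping are worth noting. First, you translate to $g=f-med(f)$ at the outset and prove $g^{\ast}(1/2)=0$ via monotonicity plus local absolute continuity of the rearrangement; the paper instead works with $f$ directly, writes $f^{\ast}(t)-f^{\ast}(1/2)=\int_{t}^{1/2}(-f^{\ast})^{\prime}$, and only at the end identifies $f^{\ast}(1/2)$ as a median and applies the shift identity $(f-a)^{\ast}=f^{\ast}-a$; this avoids ever needing the vanishing $g^{\ast}(1/2)=0$. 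Second, for the tail piece you replace the constant $g^{\ast}(t)$ by $g^{\ast}(s)$ via monotonicity before taking the $\bar{X}$-norm, whereas the paper bounds $g^{\ast}(t)\phi_{\bar{X}}(t)$ by $\left\|Q_{J}\left(J(-g^{\ast})^{\prime}\right)\right\|_{M(\bar{X})}$ and then uses the embedding $\bar{X}\subset M(\bar{X})$; both give the same constant. Your step where you cite Remark~\ref{Hardy1} for $\left\|(-g^{\ast})^{\prime}J\right\|_{\bar{X}}\leq\left\|\left|\nabla f\right|^{\ast}\right\|_{\bar{X}}$ really only needs the untruncated majorization principle (\ref{hardy}) together with $J\leq I$, a minor imprecision.
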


\begin{proof}
Let us start by remarking that since $\frac{t}{J(t)}$ is increasing, for
$f\geq0$ we have
\[
Q_{J}f(t)=\frac{J(t)}{t}\int_{t}^{\frac{1}{2}}f(z)\frac{dz}{J(z)}\geq\int
_{t}^{\frac{1}{2}}f(z)\frac{dz}{z}=Qf(t).
\]
Consequently, if $Q_{J}$ is bounded on $\bar{X}$ then $Q$ is also bounded on
$\bar{X}.$

Let $f\in Lip(\Omega)$, and let $0<s<t<\frac{1}{2}.$ Since $f^{\ast}$ is
decreasing, we have
\begin{align*}
\left\|  \left(  f^{\ast}(s)-f^{\ast}(1/2)\right)  \chi_{\lbrack
0,t)}(s)\right\|  _{\bar{X}}  &  \leq\left\|  (f^{\ast}(s)-f^{\ast}%
(t))\chi_{\lbrack0,t)}(s)\right\|  _{\bar{X}}+\left|  f^{\ast}(t)-f^{\ast
}(1/2)\right|  \left\|  \chi_{\lbrack0,t)}(s)\right\|  _{\bar{X}}\\
&  =\left\|  (f^{\ast}(s)-f^{\ast}(t))\chi_{\lbrack0,t)}(s)\right\|  _{\bar
{X}}+\left(  f^{\ast}(t)-f^{\ast}(1/2)\right)  \left\|  \chi_{\lbrack
0,t)}(s)\right\|  _{\bar{X}}\\
&  =(A)+(B).
\end{align*}
By the proof of Theorem \ref{main} we know that
\begin{equation}
(A)\leq\frac{t}{J(t)}\left\|  Q\right\|  _{\bar{X}\rightarrow\bar{X}}\left\|
\left|  \nabla f\right|  ^{\ast}\right\|  _{\bar{X}}. \label{aa1}%
\end{equation}
We estimate the second term as follows:
\begin{align*}
(B)  &  =\left(  \int_{t}^{\frac{1}{2}}\left(  -f^{\ast}\right)  ^{^{\prime}%
}(z)dz\right)  \phi_{\bar{X}}(t)\\
&  =\frac{t}{J(t)}\left(  \frac{J(t)}{t}\int_{t}^{\frac{1}{2}}\left(
J(z)\left(  -f^{\ast}\right)  ^{^{\prime}}(z)\right)  \frac{dz}{J(z)}\right)
\phi_{\bar{X}}(t)\\
&  =\frac{t}{J(t)}Q_{J}\left(  \left(  J(z)\left(  -f^{\ast}\right)
^{^{\prime}}(z)\right)  \right)  (t)\phi_{\bar{X}}(t)\\
&  \leq\frac{t}{J(t)}\sup_{t}\left[  Q_{J}\left(  J(\cdot)\left(  -f^{\ast
}\right)  ^{^{\prime}}(\cdot)\right)  (t)\phi_{\bar{X}}(t)\right] \\
&  =\frac{t}{J(t)}\left\|  Q_{J}\left(  J(\cdot)\left(  -f^{\ast}\right)
^{^{\prime}}(\cdot)\right)  \right\|  _{M(\bar{X})}.
\end{align*}
Thus,
\begin{align*}
\left(  f^{\ast}(t)-f^{\ast}(1/2)\right)  \phi_{\bar{X}}(t)  &  \leq\frac
{t}{J(t)}\sup_{t}\left[  Q_{J}\left(  J(\cdot)\left(  -f^{\ast}\right)
^{^{\prime}}(\cdot)\right)  (t)\phi_{\bar{X}}(t)\right] \\
&  =\frac{t}{J(t)}\left\|  Q_{J}\left(  J(\cdot)\left(  -f^{\ast}\right)
^{^{\prime}}(\cdot)\right)  \right\|  _{M(\bar{X})}\\
&  \leq\frac{t}{J(t)}\left\|  Q_{J}\left(  J(\cdot)\left(  -f^{\ast}\right)
^{^{\prime}}(\cdot)\right)  \right\|  _{\bar{X}}.
\end{align*}
Since we are assuming that $Q_{J}$ is bounded on $\bar{X},$ we have
\begin{align}
\left\|  Q_{J}\left(  J(\cdot)\left(  -f^{\ast}\right)  ^{^{\prime}}%
(\cdot)\right)  \right\|  _{\bar{X}}  &  \leq\left\|  Q_{J}\right\|  _{\bar
{X}\rightarrow\bar{X}}\left\|  J(\cdot)\left(  -f^{\ast}\right)  ^{^{\prime}%
}(\cdot)\right\|  _{\bar{X}}\label{aa2}\\
&  \leq\left\|  Q_{J}\right\|  _{\bar{X}\rightarrow\bar{X}}\left\|  \left|
\nabla f\right|  ^{\ast}\right\|  _{\bar{X}}\text{ \ \ (by (\ref{aa}%
))}.\nonumber
\end{align}
Adding the estimates for $(A)$ and $(B)$ (cf. (\ref{aa1}) and (\ref{aa2})
above) we obtain
\[
\left\|  \left(  f^{\ast}(s)-f^{\ast}(1/2)\right)  \chi_{\lbrack
0,t)}(s)\right\|  _{\bar{X}}\leq\frac{t}{J(t)}\left(  \left\|  Q\right\|
_{\bar{X}\rightarrow\bar{X}}+\left\|  Q_{J}\right\|  _{\bar{X}\rightarrow
\bar{X}}\right)  \left\|  \left|  \nabla f\right|  ^{\ast}\right\|  _{\bar{X}%
}.
\]
It is easy to see that $f^{\ast}(\frac{1}{2})$ is a median of $f$ (cf.
\cite{mamiast}), moreover, since for any constant $a,$ we have $f^{\ast
}(s)-a=(f-a)^{\ast}(s),$ we finally arrive at
\[
\int_{0}^{\frac{1}{2}}\left\|  (f-med(f))^{\ast}(s)\chi_{\lbrack
0,t)}(s)\right\|  _{\bar{X}}G(t)dt\leq\left(  \left\|  Q\right\|  _{\bar
{X}\rightarrow\bar{X}}+\left\|  Q_{J}\right\|  _{\bar{X}\rightarrow\bar{X}%
}\right)  \left\|  \left|  \nabla f\right|  ^{\ast}\right\|  _{\bar{X}}%
\int_{0}^{\frac{1}{2}}\frac{t}{J(t)}G(t)dt.
\]
\end{proof}

\section{Applications\label{appl}}

\subsection{Homogeneous Sobolev spaces\label{markao}}

In this subsection we consider bounded domains $\Omega\subset\mathbb{R}^{n}$
normalized so that $\left|  \Omega\right|  =1.$ We consider the Sobolev space
$W_{0}^{k,1}(\Omega)$ of functions $f$ $\in L^{1}(\Omega)$ that are $k-$ times
weakly differentiable on $\Omega$ and such that their continuation by $0$
outside $\Omega$ are $k-$ times weakly differentiable functions on
$\mathbb{R}^{n}.$ For $f\in W_{0}^{k,1}(\Omega)$ we then have $f\in
W_{0}^{k,1}(\mathbb{R}^{n}),$ with%
\[
\left\|  \left|  D^{j}f\right|  \right\|  _{L^{1}(\Omega)}=\left\|  \left|
D^{j}f\right|  \right\|  _{L^{1}(\mathbb{R}^{n})}\text{ \ \ \ }(j=0,1,\cdots
k).
\]
More generally, given $\bar{X}$ a r.i. space on $\left(  0,1\right)  $, the
Sobolev space $W_{0}^{k,\bar{X}}:=W_{0}^{k,\bar{X}}(\Omega),$ will be defined
as%
\[
W_{0}^{k,\bar{X}}=\left\{  f\in W_{0}^{k,1}(\Omega):\left\|  f\right\|
_{W_{0}^{1,\bar{X}}}:=\sum_{j=0}^{k}\left\|  \left|  D^{j}f\right|  ^{\ast
}\right\|  _{\bar{X}}<\infty\right\}  .
\]

Let%
\[
I_{n}(t)=n\left(  \gamma_{n}\right)  ^{1/n}t^{1-1/n},
\]
where $\gamma_{n}=\frac{\pi^{n/2}}{\Gamma(1+n/2)}$ is the measure of the unit
ball in $\mathbb{R}^{n}$ (i.e. $I_{n}(t)$ is the isoperimetric profile
associated to $\mathbb{R}^{n}).$

Let $f\in W_{0}^{1,1}$ then (see \cite{mmp} and \cite{mamiadv}):

\begin{enumerate}
\item
\begin{equation}
f^{\ast\ast}(t)-f^{\ast}(t)\leq\frac{t}{I_{n}(t)}\frac{1}{t}\int_{0}%
^{t}\left|  \nabla f\right|  ^{\ast}(s)ds,\text{ }0<t<1. \label{aatot0}%
\end{equation}

\item $f^{\ast}$ is locally absolutely continuous, and
\begin{equation}
\int_{0}^{t}\left|  (-f^{\ast})^{\prime}(\cdot)I_{n}(\cdot)\right|  ^{\ast
}(s)\leq\int_{0}^{t}\left|  \nabla f\right|  ^{\ast}(s)ds. \label{aatot}%
\end{equation}
\end{enumerate}

Using (\ref{aatot0}) and (\ref{aatot}) and the method of proof of Theorem
\ref{main} we readily obtain

\begin{theorem}
\label{teohomogeneo}Let $\bar{X}$ be a r.i. space. Let $G:(0,1)\rightarrow
(0,\infty)$ be such that
\begin{equation}
\int_{0}^{1}\frac{t}{I_{n}(t)}G(t)dt<\infty. \label{computada}%
\end{equation}
Then,

\begin{enumerate}
\item  If $\left\|  Q\right\|  _{\bar{X}\rightarrow\bar{X}}<\infty,$ then for
all $f\in W_{0}^{1,\bar{X}},$%
\[
\int_{0}^{1}\left\|  \left(  f^{\ast}(\cdot)-f^{\ast}(t)\right)  \chi
_{\lbrack0,t)}(\cdot)\right\|  _{\bar{X}}G(t)dt\leq\left\|  Q\right\|
_{\bar{X}\rightarrow\bar{X}}\left\|  \left|  \nabla f\right|  ^{\ast}\right\|
_{\bar{X}}\int_{0}^{1}\frac{t}{I_{n}(t)}G(t)dt.
\]

\item  If $\left\|  P\right\|  _{\bar{X}\rightarrow\bar{X}}<\infty,$ then for
all $f\in W_{0}^{1,\bar{X}},$%
\[
\int_{0}^{1}\left\|  (f^{\ast\ast}(\cdot)-f^{\ast}(\cdot))\chi_{\lbrack
0,t)}(\cdot)\right\|  _{\bar{X}}G(t)dt\leq\left\|  P\right\|  _{\bar
{X}\rightarrow\bar{X}}\left\|  \left|  \nabla f\right|  ^{\ast}\right\|
_{\bar{X}}\int_{0}^{1}\frac{t}{I_{n}(t)}G(t)dt.
\]
\end{enumerate}
\end{theorem}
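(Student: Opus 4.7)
The plan is to transport the proof of Theorem \ref{main} verbatim to the homogeneous setting, replacing the isoperimetric estimator $J$ by the Euclidean profile $I_n(t) = n\gamma_n^{1/n}\,t^{1-1/n}$, and replacing the oscillation/P\'{o}lya--Szeg\"{o} inequalities (\ref{reod00}) and (\ref{aa}) by their homogeneous analogues (\ref{aatot0}) and (\ref{aatot}). Note that $t/I_n(t) = c_n\,t^{1/n}$ is increasing, which is the only monotonicity property used in the Theorem \ref{main} argument; thus every step there carries over.

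For Part 2, I start from (\ref{aatot0}), which gives the pointwise bound
\[
f^{\ast\ast}(s) - f^{\ast}(s) \leq \frac{s}{I_n(s)}\,P(|\nabla f|^{\ast})(s),\qquad 0<s<1.
\]
Fix $t \in (0,1)$. Since $s\mapsto s/I_n(s)$ is increasing, for $s \in [0,t)$ we have $s/I_n(s) \leq t/I_n(t)$, so pointwise
\[
\bigl(f^{\ast\ast}(s) - f^{\ast}(s)\bigr)\chi_{[0,t)}(s) \leq \frac{t}{I_n(t)}\,P(|\nabla f|^{\ast})(s)\,\chi_{[0,t)}(s).
\]
Taking $\bar{X}$-norms (using monotonicity and the rearrangement invariance of $\bar{X}$), and then using the hypothesis $\|P\|_{\bar{X}\to\bar{X}}<\infty$, I obtain
\[
\bigl\|\bigl(f^{\ast\ast}(\cdot) - f^{\ast}(\cdot)\bigr)\chi_{[0,t)}(\cdot)\bigr\|_{\bar{X}} \leq \frac{t}{I_n(t)}\,\|P\|_{\bar{X}\to\bar{X}}\,\bigl\||\nabla f|^{\ast}\bigr\|_{\bar{X}}.
\]
Multiplying by $G(t)$ and integrating in $t$ over $(0,1)$, the hypothesis (\ref{computada}) yields Part 2.

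For Part 1, I mirror the proof of Theorem \ref{main}. Since $f\in W_0^{1,\bar{X}}\subset W_0^{1,1}$, (\ref{aatot}) shows $f^{\ast}$ is locally absolutely continuous, hence for $0<s<t<1$,
\[
f^{\ast}(s) - f^{\ast}(t) = \int_s^t (-f^{\ast})'(z)\,dz = \int_s^1 \frac{z}{I_n(z)}\,I_n(z)(-f^{\ast})'(z)\,\chi_{[0,t)}(z)\,\frac{dz}{z},
\]
which I recognize as $Q$ applied to the function $z\mapsto (z/I_n(z))\,I_n(z)(-f^{\ast})'(z)\chi_{[0,t)}(z)$. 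By boundedness of $Q$ on $\bar{X}$, by the monotonicity of $z/I_n(z)$ (which lets me extract the factor $t/I_n(t)$ from the norm over $[0,t)$, as in the Theorem \ref{main} proof), and finally by the P\'{o}lya--Szeg\"{o}-type inequality (\ref{aatot}) combined with Remark \ref{Hardy1}, I arrive at
\[
\bigl\|\bigl(f^{\ast}(\cdot) - f^{\ast}(t)\bigr)\chi_{[0,t)}(\cdot)\bigr\|_{\bar{X}} \leq \frac{t}{I_n(t)}\,\|Q\|_{\bar{X}\to\bar{X}}\,\bigl\||\nabla f|^{\ast}\bigr\|_{\bar{X}}.
\]
Integrating against $G(t)\,dt$ finishes the proof.

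There is no genuine obstacle here: the only substantive ingredients are already in place, namely the pointwise rearrangement inequalities (\ref{aatot0}) and (\ref{aatot}) (whose role in the compact setting was played by Theorem \ref{Iso}), the increasing character of $t/I_n(t)$, and the boundedness of the Hardy operators $P$, $Q$ on $\bar{X}$. The argument is essentially a one-to-one translation of the proof of Theorem \ref{main}, with $J$ replaced by $I_n$ and the integration range $(0,1/2)$ replaced by $(0,1)$, a change permitted because in the homogeneous context there is no need to restrict to a neighborhood of zero (we are not symmetrizing $I_n$ about $1/2$).
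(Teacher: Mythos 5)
Your proof is correct and follows exactly the route the paper indicates: the paper's own proof of Theorem~\ref{teohomogeneo} is the single remark ``Using (\ref{aatot0}) and (\ref{aatot}) and the method of proof of Theorem \ref{main} we readily obtain,'' and your write-up simply supplies the details of that verbatim transfer, with $J$ replaced by $I_n$, (\ref{reod00})/(\ref{aa}) by (\ref{aatot0})/(\ref{aatot}), and the range $(0,1/2)$ enlarged to $(0,1)$ --- a change which, as you correctly note, is legitimate here because $t/I_n(t)=c_n t^{1/n}$ is increasing on all of $(0,1)$ and the pointwise inequalities (\ref{aatot0}), (\ref{aatot}) hold on $(0,1)$.
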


In order to describe in detail the consequences of the previous result we need
to compute the integral (\ref{computada}). Towards this end let us consider
the function
\[
G(t)=\frac{1}{t\sqrt{\ln\left(  \frac{1}{t}\right)  }},\text{ }t\in(0,1).
\]
Then,
\begin{align}
\int_{0}^{1}\frac{t}{tI_{n}(t)}G(t)dt  &  =\frac{1}{n\left(  \gamma
_{n}\right)  ^{1/n}}\int_{0}^{1}t^{1/n}\frac{dt}{t\left(  \ln\frac{1}%
{t}\right)  ^{\frac{1}{2}}}\label{cuenta}\\
&  =\frac{1}{n\left(  \gamma_{n}\right)  ^{1/n}}\int_{0}^{\infty}z^{-\frac
{1}{2}}e^{-z/n}dz\nonumber\\
&  =\frac{\sqrt{\pi}n^{\frac{1}{2}}}{n\left(  \gamma_{n}\right)  ^{1/n}%
}\nonumber\\
&  =\frac{\Gamma(1+\frac{n}{2})^{1/n}}{n^{\frac{1}{2}}}.\nonumber
\end{align}
Consequently, we have the following

\begin{corollary}
\label{teo01}Let $\bar{X}$ be a r.i. space on $(0,1)$. Then,

\begin{enumerate}
\item  If $\underline{\alpha}_{X}>0,$ then, for all $f\in W_{0}^{1,\bar{X}}\ $%
\[
\int_{0}^{1}\left\|  \left(  f^{\ast}(s)-f^{\ast}(t)\right)  \chi
_{\lbrack0,t)}(s)\right\|  _{\bar{X}}\frac{dt}{t\left(  \ln\frac{1}{t}\right)
^{\frac{1}{2}}}\leq\frac{\Gamma(1+\frac{n}{2})^{1/n}}{n^{\frac{1}{2}}}\left\|
Q\right\|  _{\bar{X}\rightarrow\bar{X}}\left\|  \left|  \nabla f\right|
^{\ast}\right\|  _{\bar{X}}.
\]

\item  If $\overline{\alpha}_{X}<1,$ then, for all $f\in W_{0}^{1,\bar{X}},$%
\[
\int_{0}^{1}\left\|  \left(  f^{\ast\ast}(s)-f^{\ast}(s)\right)  \chi
_{\lbrack0,t)}(s)\right\|  _{\bar{X}}\frac{dt}{t\left(  \ln\frac{1}{t}\right)
^{\frac{1}{2}}}\leq\frac{\Gamma(1+\frac{n}{2})^{1/n}}{n^{\frac{1}{2}}}\left\|
P\right\|  _{\bar{X}\rightarrow\bar{X}}\left\|  \left|  \nabla f\right|
^{\ast}\right\|  _{\bar{X}}.
\]
\end{enumerate}
\end{corollary}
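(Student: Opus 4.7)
The plan is to recognize Corollary \ref{teo01} as the direct specialization of Theorem \ref{teohomogeneo} to the weight $G(t) = 1/\bigl(t\sqrt{\ln(1/t)}\bigr)$, combined with the Boyd-index characterization of boundedness of the Hardy operators $P$ and $Q$ on $\bar{X}$. There is essentially no new analytic content beyond these two ingredients; the only task is to bookkeep the constants.

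First, I would fix $G(t) = 1/\bigl(t\sqrt{\ln(1/t)}\bigr)$ on $(0,1)$ and verify the integrability hypothesis (\ref{computada}) of Theorem \ref{teohomogeneo}. Since $I_n(t) = n\gamma_n^{1/n} t^{1-1/n}$ with $\gamma_n = \pi^{n/2}/\Gamma(1+n/2)$, the integral $\int_0^1 \frac{t}{I_n(t)}G(t)\,dt$ becomes, after the substitution $z = \ln(1/t)$, a Gamma integral: this is precisely the computation (\ref{cuenta}) carried out just above the statement of the corollary, and it evaluates to $\Gamma(1+n/2)^{1/n}/n^{1/2}$. In particular the hypothesis is satisfied with an \emph{explicit} value of the integral, which will appear as the constant on the right-hand side.

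Next, I would invoke the Boyd-index characterization (\ref{alcance}): the assumption $\underline{\alpha}_{\bar{X}} > 0$ in part~(1) is exactly the condition that $Q = Q_0$ is bounded on $\bar{X}$, while the assumption $\overline{\alpha}_{\bar{X}} < 1$ in part~(2) is exactly the condition that $P$ is bounded on $\bar{X}$. Having both the integrability of the weight and the boundedness of the relevant Hardy operator, part~(1) of Theorem \ref{teohomogeneo} applies in the first case and part~(2) applies in the second, yielding the stated inequalities with the constant $\Gamma(1+n/2)^{1/n}/n^{1/2}$ coming from the computed integral.

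There is no serious obstacle here: the work was done in establishing (\ref{aatot0})--(\ref{aatot}) and in the proof of Theorem \ref{teohomogeneo}. The only place where one must be a little careful is keeping the two parallel cases straight — matching $P$ with the $f^{**} - f^*$ oscillation (via (\ref{aatot0})) and matching $Q$ with the $f^*(s) - f^*(t)$ oscillation (via the fundamental theorem of calculus and (\ref{aatot})), as in the proof of Theorem \ref{main}. Once these pairings are made, the corollary follows by substitution.
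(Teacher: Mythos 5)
Your proposal is correct and follows exactly the paper's route: specialize Theorem \ref{teohomogeneo} to $G(t)=1/(t\sqrt{\ln(1/t)})$, use the computation (\ref{cuenta}) for the value of the integral, and invoke the Boyd-index characterization (\ref{alcance}) to translate $\underline{\alpha}_{\bar{X}}>0$ and $\overline{\alpha}_{\bar{X}}<1$ into boundedness of $Q$ and $P$ respectively. Nothing to add.
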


Is easy to see that Corollary \ref{teo01} gives the main result of \cite{FKS}
as a special case. In fact, we will now show an extension, valid for higher
derivatives, which for easier comparison, we shall formulate in terms of the
spaces defined in Definition \ref{deffiorenz} above.

The isoperimetric operator in this case is given by%
\[
Q_{I_{n}}f(t):=\frac{I_{n}(t)}{t}\int_{t}^{1}f(z)\frac{dz}{I_{n}(z)}%
=t^{-1/n}\int_{t}^{1}z^{1/n}f(z)\frac{dz}{z}.
\]
Observe that $Q_{I_{n}}$ is bounded on $\bar{X}$ if and only if $\underline
{\alpha}_{\bar{X}}>1/n.$

\begin{theorem}
\label{ordenk}Let $\bar{X}$ be a r.i. space such that $\underline{\alpha
}_{\bar{X}}>0$. Let $M$ be\ the smallest natural number such that
\begin{equation}
\underline{\alpha}_{\bar{X}}>1/M. \label{idice}%
\end{equation}
Let $\Omega\subset\mathbb{R}^{n}$ be a bounded domain such that $\left|
\Omega\right|  =1,$ and suppose that $n\geq M.$ Then for all $f\in
W_{0}^{k,\bar{X}},$ we have
\begin{equation}
\left\|  f\right\|  _{\bar{X}_{k,\log}}\leq c(M,k,\bar{X})\left\|  \left|
D^{k}f\right|  ^{\ast}\right\|  _{\bar{X}},\text{ } \label{pfin}%
\end{equation}
where the constant $c(M,k,\bar{X})$ does not depend on the dimension$.$
\end{theorem}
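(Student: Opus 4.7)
The plan is to argue by induction on $k$. For the base case $k=1$, Corollary~\ref{teo01}(1) applies (since $\underline{\alpha}_{\bar X}>1/M>0$) and yields
\[
\int_0^1 \|(f^*(s)-f^*(t))\chi_{[0,t)}(s)\|_{\bar X}\,\frac{dt}{t(\ln 1/t)^{1/2}}\le \frac{\Gamma(1+n/2)^{1/n}}{n^{1/2}}\,\|Q\|_{\bar X\to\bar X}\,\||\nabla f|^*\|_{\bar X},
\]
with prefactor uniformly bounded in $n$ by Stirling's formula. The triangle inequality
\[
\|f^*\chi_{[0,t)}\|_{\bar X}\le \|(f^*(s)-f^*(t))\chi_{[0,t)}(s)\|_{\bar X}+f^*(t)\phi_{\bar X}(t)
\]
then reduces the problem to estimating $\int_0^1 f^*(t)\phi_{\bar X}(t)\,dt/(t(\ln 1/t)^{1/2})$ by a dimensionless multiple of $\||\nabla f|^*\|_{\bar X}$. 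This boundary term can be handled by writing $f^*(t)=\int_t^1(-f^*)'(s)\,ds$ (valid since $f^*(1^-)=0$ for $f\in W_0^{1,\bar X}$), swapping the order of integration, and invoking the P\'olya--Szeg\"o estimate (\ref{aatot}) together with the uniform boundedness on $\bar X$ of the weighted Hardy operator $Q_{I_n}$. The latter is guaranteed exactly by the hypothesis $\underline{\alpha}_{\bar X}>1/M$ combined with $n\ge M$, so that $1/n\le 1/M<\underline{\alpha}_{\bar X}$. Alternatively, one may apply Theorem~\ref{inclusion} to $|f|$, using that $f^*(1/2)$ is a median of $|f|$.

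For the inductive step, assume the theorem at level $k-1$. By Proposition~\ref{indice}(iii), $\underline{\alpha}_{\bar X_{k-1,\log}}\ge \underline{\alpha}_{\bar X}>1/M$, so the base case applies with $\bar X$ replaced by $\bar X_{k-1,\log}$ and delivers
\[
\|f\|_{(\bar X_{k-1,\log})_{1,\log}}\le c(M,k,\bar X)\,\||\nabla f|^*\|_{\bar X_{k-1,\log}},
\]
with a constant independent of $n$. A direct Fubini computation, splitting the double integral on $\{s<t\}$ and $\{s>t\}$ and using the primitives $\int_u^1 dt/(t(\ln 1/t)^{1/2})=2(\ln 1/u)^{1/2}$ from (\ref{cuenta}) together with $\int_t^1 ds/(s(\ln 1/s)^{1-k/2})=(2/k)(\ln 1/t)^{k/2}$, produces the reiteration equivalence $(\bar X_{k-1,\log})_{1,\log}\approx \bar X_{k,\log}$ with constants depending only on $k$. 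Since $|\nabla f|\in W_0^{k-1,\bar X}$, the inductive hypothesis combined with the pointwise chain/Kato inequality $\lvert D^{k-1}\lvert\nabla f\rvert\rvert\le C_k\lvert D^k f\rvert$ gives $\||\nabla f|^*\|_{\bar X_{k-1,\log}}\le c\,\||D^k f|^*\|_{\bar X}$, and chaining these three estimates produces (\ref{pfin}).

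The principal obstacle I anticipate is the base case: converting the oscillation bound of Corollary~\ref{teo01} into a genuine bound on $\|f^*\chi_{[0,t)}\|_{\bar X}$ while preserving dimension-free constants. This is precisely where the assumption $n\ge M$ enters essentially, through uniform boundedness of $Q_{I_n}$ on $\bar X$. Once that boundary term is under control, the inductive step is a mechanical reiteration in the scale $\bar X_{k,\log}$, the necessary Boyd-index inheritance being furnished by Proposition~\ref{indice}.
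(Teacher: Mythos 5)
Your approach matches the paper's: the same base-case decomposition (oscillation term plus boundary term controlled by the isoperimetric Hardy operator $Q_{I_n}$), followed by the same inductive reiteration in the $\bar X_{k,\log}$ scale via Proposition~\ref{indice}(iii) and the Fubini identity $(\bar X_{k-1,\log})_{1,\log}\approx \bar X_{k,\log}$. The one substantive correction needed is in the base case: you must run the argument on $|f|$ rather than $f$, because the $\bar X_{k,\log}$-norm is defined via $|f|^*$ and the identity $f^*(1^-)=0$ that your boundary-term estimate relies on is false for sign-changing $f\in W_0^{1,\bar X}$ (the rearrangement $f^*$ in this paper is that of $f$ itself, not of $|f|$, so $f^*(1^-)$ is essentially $\operatorname*{ess\,inf} f$); the correct identity $|f|^*(1)=0$ does hold since $|f|\ge 0$ is supported on a set of measure at most~$1$, and the paper runs everything on $|f|$ together with $|\nabla|f||\le|\nabla f|$ so that (\ref{aatot0}), (\ref{aatot}) apply. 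A secondary remark: your ``alternative'' of invoking Theorem~\ref{inclusion} applied to $|f|$ does not directly give the claim, since that theorem bounds $\|(|f|-\mathrm{med}(|f|))^*\chi_{[0,t)}\|_{\bar X}$ over $(0,1/2)$ and $\mathrm{med}(|f|)$ need not vanish; the paper reuses only the \emph{technique} of Theorem~\ref{inclusion} (the $Q_{J}$-plus-Marcinkiewicz estimate on the boundary term) rather than the theorem itself. With these adjustments the argument coincides with the paper's; note also that the Kato-type bound $|D^{k-1}|\nabla f||\le C|D^k f|$ you mention is used silently by the paper in passing from (\ref{qqq}) to (\ref{uff}).
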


\begin{proof}
We proceed by induction. Let $k=1,$ and let $f\in W_{0}^{1,\bar{X}}$. For
$0<s<t<1,$ we have%
\begin{align*}
\left\|  \left|  f\right|  ^{\ast}(s)\chi_{\lbrack0,t)}(s)\right\|  _{\bar
{X}}  &  \leq\left\|  (\left|  f\right|  ^{\ast}(s)-\left|  f\right|  ^{\ast
}(t))\chi_{\lbrack0,t)}(s)\right\|  _{\bar{X}}+\left|  f\right|  ^{\ast
}(t)\left\|  \chi_{\lbrack0,t)}(s)\right\|  _{\bar{X}}\\
&  =(A)+(B).
\end{align*}
By the proof of Theorem \ref{main} we have
\begin{equation}
(A)\leq\frac{t^{1/n}}{n\gamma_{n}^{1/n}}\left\|  Q\right\|  _{\bar
{X}\rightarrow\bar{X}}\left\|  \left|  \nabla f\right|  ^{\ast}\right\|
_{\bar{X}}. \label{aaa1}%
\end{equation}
Now, since $f\in W_{0}^{1,\bar{X}}$ implies $\left|  f\right|  \in
W_{0}^{1,\bar{X}}$, and moreover, since $\left|  f\right|  ^{\ast}(1)=0,\ $we
can write
\[
\left|  f\right|  ^{\ast}(t)=\int_{t}^{1}\left(  -\left|  f\right|  ^{\ast
}\right)  ^{^{\prime}}(z)dz,\text{ \ \ }0<t<1.
\]
Consequently,%
\[
(B)=\left(  \int_{t}^{1}\left(  -\left|  f\right|  ^{\ast}\right)  ^{^{\prime
}}(z)dz\right)  \phi_{\bar{X}}(t).
\]
From this point we follow the proof of Theorem \ref{inclusion} to obtain%
\[
\left(  \int_{t}^{1}\left(  -\left|  f\right|  ^{\ast}\right)  ^{^{\prime}%
}(z)dz\right)  \phi_{\bar{X}}(t)\leq\frac{t^{1/n}}{n\gamma_{n}^{1/n}}\left\|
Q_{I_{n}}\right\|  _{\bar{X}\rightarrow\bar{X}}\left\|  \left|  \nabla
f\right|  ^{\ast}\right\|  _{\bar{X}}.
\]
Adding the estimates obtained for $(A)$ and $(B)$ we get
\[
\left\|  \left|  f\right|  ^{\ast}(s)\chi_{\lbrack0,t)}(s)\right\|  _{\bar{X}%
}\leq\frac{t^{1/n}}{n\gamma_{n}^{1/n}}\left(  \left\|  Q\right\|  _{\bar
{X}\rightarrow\bar{X}}+\left\|  Q_{I_{n}}\right\|  _{\bar{X}\rightarrow\bar
{X}}\right)  \left\|  \left|  \nabla f\right|  ^{\ast}\right\|  _{\bar{X}}.
\]

Therefore, using (\ref{cuenta}), (\ref{norma}) and (\ref{idice}) we get%
\begin{align*}
\left\|  f\right\|  _{\bar{X}_{1,\log}}  &  =\int_{0}^{1}\left\|  \left|
f\right|  ^{\ast}(s)\chi_{\lbrack0,t)}(s)\right\|  _{\bar{X}}\frac
{dt}{t\left(  \ln\frac{1}{t}\right)  ^{\frac{1}{2}}}\\
&  \leq\frac{\Gamma(1+\frac{n}{2})^{1/n}}{n^{\frac{1}{2}}}\left(  \left\|
Q\right\|  _{\bar{X}\rightarrow\bar{X}}+\int_{1}^{\infty}h_{\bar{X}}(\frac
{1}{s})s^{\frac{1}{M}-1}ds\right)  \left\|  \left|  \nabla f\right|  ^{\ast
}\right\|  _{\bar{X}}.
\end{align*}
Recall that for $x\geq1,$ we have $\Gamma(x)\leq x^{x}$; consequently%
\[
\frac{\Gamma(1+\frac{n}{2})^{1/n}}{n^{\frac{1}{2}}}=\left(  \frac{n}%
{2}\right)  ^{1/n}\frac{\Gamma(\frac{n}{2})^{1/n}}{n^{\frac{1}{2}}}\leq
\frac{1}{\sqrt{2}}\left(  \frac{n}{2}\right)  ^{1/n}\leq c.
\]
Thus,%
\[
\left\|  f\right\|  _{\bar{X}_{1,\log}}\leq c(M,1,\bar{X})\left\|  \left|
\nabla f\right|  \right\|  _{\bar{X}},
\]
where $c(M,1,\bar{X})$ is a constant that does not depend on $n.$

Let $k\geq2,$ and suppose that the desired inequality is valid for $k-1.$ Let
$f\in W_{0}^{k,\bar{X}},$ then, by the induction hypothesis, and the fact that
$\left|  \nabla f\right|  \in W_{0}^{k-1,\bar{X}},$ we have%
\begin{align}
\left\|  \left|  \nabla f\right|  \right\|  _{\bar{X}_{k-1,\log}}  &
:=\int_{0}^{1}\left\|  \left|  \nabla f\right|  ^{\ast}(s)\chi_{\lbrack
0,t)}(s)\right\|  _{\bar{X}}\frac{dt}{t\left(  \ln\frac{1}{t}\right)
^{1-(k-1)/2}}\label{qqq}\\
&  \leq c(M,k-1,\bar{X})\left\|  \left|  D^{k-1}\left|  \nabla f\right|
\right|  ^{\ast}\right\|  _{\bar{X}}.\nonumber
\end{align}
By Proposition \ref{indice} (part 3), the r.i. space $\bar{X}_{k-1,\log}$
satisfies $\underline{\alpha}_{\bar{X}_{k-1,\log}}>r.$ Consequently we may
apply the result obtained in the first step of the proof to the space $\bar
{X}_{k-1,\log}$ , and we obtain%
\begin{align}
\left\|  f\right\|  _{(\bar{X}_{k-1,\log})_{1,\log}}  &  \leq c(M,1,\bar
{X})\left\|  \left|  \nabla f\right|  ^{\ast}\right\|  _{\bar{X}_{k-1,\log}%
}\nonumber\\
&  \leq c(M,1,\bar{X})c(M,k-1,\bar{X})\left\|  \left|  D^{k-1}\left|  \nabla
f\right|  \right|  ^{\ast}\right\|  _{\bar{X}}\text{ \ (by (\ref{qqq}%
))}\nonumber\\
&  \leq c(M,1,\bar{X})c(M,k-1,\bar{X})\left\|  \left|  D^{k}f\right|  ^{\ast
}\right\|  _{\bar{X}}. \label{uff}%
\end{align}
We will show in a moment that
\begin{equation}
\left\|  f\right\|  _{(\bar{X}_{k-1,\log})_{1,\log}}=\frac{2k}{k-1}\left\|
f\right\|  _{\bar{X}_{k,\log}}. \label{concluira}%
\end{equation}
Assuming (\ref{concluira}) and combining it with (\ref{uff}) we see that%
\[
\left\|  f\right\|  _{\bar{X}_{k,\log}}\leq c(M,k,\bar{X})\left\|  \left|
D^{k}f\right|  ^{\ast}\right\|  _{\bar{X}}.
\]
It thus remains to prove (\ref{concluira}). For this purpose we write
\begin{align*}
\left\|  f\right\|  _{(\bar{X}_{k-1,\log})_{1,\log}}  &  =\int_{0}^{1}\left\|
\left|  f\right|  ^{\ast}(s)\chi_{\lbrack0,t)}(s)\right\|  _{\bar{X}%
_{k-1,\log}}\frac{dt}{t\left(  \ln\frac{1}{t}\right)  ^{\frac{1}{2}}}\\
&  =\int_{0}^{1}\left\|  \left(  \left|  f\right|  ^{\ast}\chi_{\lbrack
0,t)}\right)  ^{\ast}(\cdot)\chi_{\lbrack0,s)}(\cdot)\right\|  _{\bar
{X}_{k-1,\log}}\frac{dt}{t\left(  \ln\frac{1}{t}\right)  ^{\frac{1}{2}}}\\
&  =\int_{0}^{1}\left(  \int_{0}^{1}\left\|  \left|  f\right|  ^{\ast}%
(\cdot)\chi_{\lbrack0,t)}(\cdot)\chi_{\lbrack0,s)}(\cdot)\right\|  _{\bar{X}%
}\frac{ds}{s\left(  \ln\frac{1}{s}\right)  ^{1-(k-1)/2}}\right)  \frac
{dt}{t\left(  \ln\frac{1}{t}\right)  ^{\frac{1}{2}}}\\
&  =\int_{0}^{1}\left(  \int_{0}^{1}\left\|  \left|  f\right|  ^{\ast}%
(\cdot)\chi_{\lbrack0,\min(s,t))}(\cdot)\right\|  _{\bar{X}}\frac{ds}{s\left(
\ln\frac{1}{s}\right)  ^{1-(k-1)/2}}\right)  \frac{dt}{t\left(  \ln\frac{1}%
{t}\right)  ^{\frac{1}{2}}}\\
&  =\int_{0}^{1}\left(  \int_{0}^{t}\left\|  \left|  f\right|  ^{\ast}%
(\cdot)\chi_{\lbrack0,s))}(\cdot)\right\|  _{\bar{X}}\frac{ds}{s\left(
\ln\frac{1}{s}\right)  ^{1-(k-1)/2}}\right)  \frac{dt}{t\left(  \ln\frac{1}%
{t}\right)  ^{\frac{1}{2}}}\\
&  +\int_{0}^{1}\left(  \int_{t}^{1}\left\|  \left|  f\right|  ^{\ast}%
(\cdot)\chi_{\lbrack0,t))}(\cdot)\right\|  _{\bar{X}}\frac{ds}{s\left(
\ln\frac{1}{s}\right)  ^{1-(k-1)/2}}\right)  \frac{dt}{t\left(  \ln\frac{1}%
{t}\right)  ^{\frac{1}{2}}}\\
&  =A+B.
\end{align*}
By Fubini's Theorem we have%
\begin{align*}
A  &  =\int_{0}^{1}\left(  \int_{0}^{t}\left\|  \left|  f\right|  ^{\ast
}(\cdot)\chi_{\lbrack0,s)}(\cdot)\right\|  _{\bar{X}}\frac{ds}{s\left(
\ln\frac{1}{s}\right)  ^{1-(k-1)/2}}\right)  \frac{dt}{t\left(  \ln\frac{1}%
{t}\right)  ^{\frac{1}{2}}}\\
&  =\int_{0}^{1}\left\|  \left|  f\right|  ^{\ast}(\cdot)\chi_{\lbrack
0,s)}(\cdot)\right\|  _{\bar{X}}\frac{1}{s\left(  \ln\frac{1}{s}\right)
^{1-(k-1)/2}}\left(  \int_{s}^{1}\frac{dt}{t\left(  \ln\frac{1}{t}\right)
^{\frac{1}{2}}}\right)  ds\\
&  =2\int_{0}^{1}\left\|  \left|  f\right|  ^{\ast}(\cdot)\chi_{\lbrack
0,s)}(\cdot)\right\|  _{\bar{X}}\frac{ds}{s\left(  \ln\frac{1}{s}\right)
^{1-k/2}}\\
&  =2\left\|  f\right\|  _{\bar{X}_{k,\log}}.
\end{align*}
We also have,
\begin{align*}
B  &  =\int_{0}^{1}\left(  \int_{t}^{1}\left\|  \left|  f\right|  ^{\ast
}(\cdot)\chi_{\lbrack0,t))}(\cdot)\right\|  _{\bar{X}}\frac{ds}{s\left(
\ln\frac{1}{s}\right)  ^{1-(k-1)/2}}\right)  \frac{dt}{t\left(  \ln\frac{1}%
{t}\right)  ^{\frac{1}{2}}}\\
&  =\int_{0}^{1}\left\|  \left|  f\right|  ^{\ast}(\cdot)\chi_{\lbrack
0,t))}(\cdot)\right\|  _{\bar{X}}\left(  \int_{t}^{1}\frac{ds}{s\left(
\ln\frac{1}{s}\right)  ^{1-(k-1)/2}}\right)  \frac{dt}{t\left(  \ln\frac{1}%
{t}\right)  ^{\frac{1}{2}}}\\
&  =\int_{0}^{1}\left\|  \left|  f\right|  ^{\ast}(\cdot)\chi_{\lbrack
0,s)}(\cdot)\right\|  _{\bar{X}}\frac{2}{k-1}\left(  \ln\frac{1}{t}\right)
^{(k-1)/2}\frac{dt}{t\left(  \ln\frac{1}{t}\right)  ^{\frac{1}{2}}}\text{
(since }k\geq2)\\
&  =\frac{2}{k-1}\int_{0}^{1}\left\|  \left|  f\right|  ^{\ast}(\cdot
)\chi_{\lbrack0,s)}(\cdot)\right\|  _{\bar{X}}\frac{dt}{t\left(  \ln\frac
{1}{t}\right)  ^{1-k/2}}\\
&  =\frac{2}{k-1}\left\|  f\right\|  _{\bar{X}_{k,\log}}.
\end{align*}
Now, $A+B$ gives (\ref{concluira}) concluding the proof of the theorem.
\end{proof}

In particular we have

\begin{example}
(cf. \cite{FKS} for the case $k=1$) Let $\bar{X}=$ $L^{p},$ then
$\underline{\alpha}_{L^{p}}=1/p.$ Let $M$ the smallest natural number such
that
\[
\frac{1}{p}>\frac{1}{M}.
\]
Let $n\geq M,$ and let $\Omega\subset\mathbb{R}^{n}$ be a bounded domain
normalized so that $\left|  \Omega\right|  =1.$ Then, for all $f\in
W_{0}^{k,p}(\Omega),$ we have
\[
\int_{0}^{1}\left(  \int_{0}^{s}\left(  \left|  f\right|  ^{\ast}(s)\right)
^{p}ds\right)  ^{1/p}\frac{dt}{t\left(  \ln\frac{1}{t}\right)  ^{1-k/2}}\leq
c(M,k,L^{p})\left\|  \left|  D^{k}f\right|  \right\|  _{L^{p}}.
\]
\end{example}

\subsection{The unit ball on $\mathbb{R}^{n}$}

Let $\left(  B^{n},\left|  \cdot\right|  ,\mu\right)  $ be the open unit ball
on $\mathbb{R}^{n}$ endowed with Euclidean metric $\left|  \cdot\right|  $ and
with the normalized Lebesgue measure $\mu=\frac{dx}{\gamma_{n}},$ where
$\gamma_{n}=\frac{\pi^{n/2}}{\Gamma(1+n/2)}$ is the measure of $B^{n}$. We
consider the Sobolev space $W^{1,1}:=W^{1,1}(B^{n})$ of functions $f$ $\in
L^{1}(B^{n})$ that are weakly differentiable on $B^{n}$ and $\left|  \nabla
f\right|  \in L^{1}(B^{n})$. Given $\bar{X}$ a r.i. space on $\left(
0,1\right)  $, the Sobolev space $W^{1,\bar{X}}:=W^{1,\bar{X}}(B^{n})$ is
defined by%
\[
W^{1,\bar{X}}=\left\{  f\in W^{1,1}:\left\|  f\right\|  _{W^{1,\bar{X}}%
}:=\left\|  \left|  f\right|  _{\mu}^{\ast}\right\|  _{\bar{X}}+\left\|
\left|  \nabla f\right|  _{\mu}^{\ast}\right\|  _{\bar{X}}<\infty\right\}  .
\]

Let $I_{B^{n}}$ be the isoperimetric profile of $\left(  B^{n},\left|
\cdot\right|  ,\mu\right)  .$ It is known that (cf. \cite[Lemma 1 pag
163]{Maz})
\begin{equation}
I_{B^{n}}(t)\geq\frac{\gamma_{n-1}}{\gamma_{n}}2^{1-1/n}\min(t,1-t)^{1-1/n}%
=J_{B^{n}}(t),\text{ \ \ }0<t<1. \label{isobola}%
\end{equation}
In fact, the constant that appears on the left hand side of (\ref{isobola}) is
best possible. Moreover, we recall that for $f\in W^{1,1}$ the inequalities
(\ref{aatot0}) and (\ref{aatot}) hold (cf. \cite{mamiast}).

Let%
\[
G(t)=\frac{1}{t\left(  \ln\frac{1}{t}\right)  ^{\frac{1}{2}}}.
\]
Then,%
\begin{align*}
\int_{0}^{1/2}\frac{t}{J_{B^{n}}(t)}G(t)dt  &  =\frac{\gamma_{n}}{\gamma
_{n-1}}\left(  \frac{1}{2}\right)  ^{1-1/n}\int_{0}^{1/2}t^{1/n}\frac
{dt}{t\left(  \ln\frac{1}{t}\right)  ^{\frac{1}{2}}}\\
&  \leq\frac{\gamma_{n}}{\gamma_{n-1}}\left(  \frac{1}{2}\right)  ^{1-1/n}%
\int_{0}^{1}t^{1/n}\frac{dt}{t\left(  \ln\frac{1}{t}\right)  ^{\frac{1}{2}}}\\
&  =\frac{\gamma_{n}}{\gamma_{n-1}}\left(  \frac{1}{2}\right)  ^{1-1/n}%
n^{\frac{1}{2}}\int_{0}^{\infty}z^{-\frac{1}{2}}e^{-z/n}dz\text{
\ \ }(e^{-z/n}=t^{1/n})\\
&  =\frac{\gamma_{n}}{\gamma_{n-1}}\left(  \frac{1}{2}\right)  ^{1-1/n}%
\sqrt{\pi}n^{\frac{1}{2}}.
\end{align*}
The associated isoperimetric operator is given by%
\[
Q_{J_{B^{n}}}f(t):=\frac{J_{B^{n}}(t)}{t}\int_{t}^{1/2}f(z)\frac{dz}{J_{B^{n}%
}(z)}=t^{-1/n}\int_{t}^{1/2}z^{1/n}f(z)\frac{dz}{z}=Q_{1/n}f(t).
\]
By the general theory (cf. (\ref{alcance}) and (\ref{norma}) in\ Section
\ref{back}), $Q_{1/n}$ is bounded on $\bar{X}$ if and only if $\underline
{\alpha}_{X}>1/n.$ Moreover,%
\[
\left\|  Q_{1/n}\right\|  \leq\int_{1}^{\infty}h_{\bar{X}}(\frac{1}%
{s})s^{\frac{1}{n}-1}ds.
\]
The previous discussion, combined with Theorems \ref{main} and \ref{inclusion}%
, gives the following

\begin{theorem}
Let $\bar{X}$ be a r.i. space on $(0,1).$ Then,

\begin{enumerate}
\item  If $\underline{\alpha}_{X}>0,$ then\footnote{note that $u_{\mu}^{\ast
}(s)=u^{\ast}(\gamma_{n}s)$.}, for all $f\in W^{1,\bar{X}}\ $%
\[
\int_{0}^{1/2}\left\|  \left(  f^{\ast}(\gamma_{n}s)-u^{\ast}(\gamma
_{n}t)\right)  \chi_{\lbrack0,t)}(s)\right\|  _{\bar{X}}\frac{dt}{t\left(
\ln\frac{1}{t}\right)  ^{\frac{1}{2}}}\leq\sqrt{\pi}n^{\frac{1}{2}}%
\frac{\gamma_{n}}{\gamma_{n-1}}\left(  \frac{1}{2}\right)  ^{^{1-1/n}}\left\|
Q\right\|  _{\bar{X}\rightarrow\bar{X}}\left\|  \left|  \nabla f\right|
^{\ast}(\gamma_{n}s)\right\|  _{\bar{X}}.
\]

\item  If $\overline{\alpha}_{X}<1,$ then, for all $f\in W^{1,\bar{X}}$%
\begin{align*}
&  \int_{0}^{1/2}\left\|  \left(  \frac{1}{s}\int_{0}^{s}f^{\ast}(\gamma
_{n}z)dz-f^{\ast}(\gamma_{n}s)\right)  \chi_{\lbrack0,t)}(s)\right\|
_{\bar{X}}\frac{dt}{t\left(  \ln\frac{1}{t}\right)  ^{\frac{1}{2}}}\\
&  \leq\sqrt{\pi}n^{\frac{1}{2}}\frac{\gamma_{n}}{\gamma_{n-1}}\left(
\frac{1}{2}\right)  ^{^{1-1/n}}\left\|  P\right\|  _{\bar{X}\rightarrow\bar
{X}}\left\|  \left|  \nabla f\right|  ^{\ast}(\gamma_{n}s)\right\|  _{\bar{X}%
}.
\end{align*}
\end{enumerate}

\begin{itemize}
\item [3.]Suppose that $\underline{\alpha}_{X}>0,$ and let $M$ be\ the
smallest natural number such that
\[
\underline{\alpha}_{\bar{X}}>1/M,
\]
and furthermore suppose that $n\geq M.$ Then, for all $f\in W^{1,\bar{X}},$ we
have%
\begin{align*}
&  \int_{0}^{1/2}\left\|  (f-med(f))^{\ast}(\gamma_{n}s)\chi_{\lbrack
0,t)}(s)\right\|  _{\bar{X}}\frac{dt}{t\left(  \ln\frac{1}{t}\right)
^{\frac{1}{2}}}\\
&  \leq\left(  \frac{\sqrt{\pi}n^{\frac{1}{2}}\gamma_{n}}{\gamma_{n-1}}\left(
\frac{1}{2}\right)  ^{^{1-1/n}}\int_{1}^{\infty}h_{X}(\frac{1}{s})s^{\frac
{1}{M}}\frac{ds}{s}\right)  \left\|  \left|  \nabla f\right|  ^{\ast}%
(\gamma_{n}s)\right\|  _{\bar{X}}.
\end{align*}
In particular, since
\[
\lim_{n\rightarrow\infty}\sqrt{\pi}n^{\frac{1}{2}}\frac{\gamma_{n}}%
{\gamma_{n-1}}\left(  \frac{1}{2}\right)  ^{^{1-1/n}}=\lim_{n\rightarrow
\infty}\pi\frac{n^{\frac{1}{2}}\Gamma(1+\frac{n-1}{2})}{\Gamma(1+\frac{n}{2}%
)}\left(  \frac{1}{2}\right)  ^{^{1-1/n}}=\frac{\sqrt{2}}{2}\pi,
\]
there exists a constant $c$ independent of $n,$ such that for all $f\in
W^{1,\bar{X}}$
\[
\int_{0}^{1/2}\left\|  (f-med(f))^{\ast}(\gamma_{n}s)\chi_{\lbrack
0,t)}(s)\right\|  _{\bar{X}}\frac{dt}{t\left(  \ln\frac{1}{t}\right)
^{\frac{1}{2}}}\leq c\left\|  \left|  \nabla f\right|  ^{\ast}(\gamma
_{n}s)\right\|  _{\bar{X}}.
\]
\end{itemize}
\end{theorem}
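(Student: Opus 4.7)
The three inequalities follow by specializing the general results of Section~\ref{thema} to the probability space $(B^{n},|\cdot|,\mu)$, so the plan is essentially a matter of bookkeeping: identify the correct isoperimetric estimator, apply the appropriate theorem, and then plug in the integral that has already been computed just before the statement.

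For Part~1, the plan is to apply Theorem~\ref{main}(1) with isoperimetric estimator $J_{B^{n}}$ from~(\ref{isobola}), test function $G(t)=1/(t(\ln 1/t)^{1/2})$, and ambient r.i.\ space $\bar{X}$; the hypothesis $\underline{\alpha}_{\bar{X}}>0$ is precisely what guarantees $Q$ is bounded on $\bar{X}$. The key quantitative input is the displayed estimate
\[
\int_{0}^{1/2}\frac{t}{J_{B^{n}}(t)}\,G(t)\,dt\leq \sqrt{\pi}\,n^{1/2}\frac{\gamma_{n}}{\gamma_{n-1}}\left(\tfrac{1}{2}\right)^{1-1/n},
\]
obtained by the computation preceding the theorem. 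The only slightly tricky point is cosmetic: since $\mu$ is normalized Lebesgue measure on $B^{n}$, one has $f_{\mu}^{\ast}(s)=f^{\ast}(\gamma_{n}s)$, which is why the inequality is stated with the argument $\gamma_{n}s$ on both sides. Part~2 follows in the same way by invoking Theorem~\ref{main}(2) under the hypothesis $\overline{\alpha}_{\bar{X}}<1$ (which gives boundedness of $P$), again using $f_{\mu}^{\ast\ast}(s)=\frac{1}{s}\int_{0}^{s}f^{\ast}(\gamma_{n}z)\,dz$.

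For Part~3, I would invoke Theorem~\ref{inclusion}, which requires the boundedness of $Q_{J_{B^{n}}}$ on $\bar{X}$. As noted in the excerpt, $Q_{J_{B^{n}}}=Q_{1/n}$, so by~(\ref{alcance}) this operator is bounded iff $\underline{\alpha}_{\bar{X}}>1/n$; the standing hypotheses $\underline{\alpha}_{\bar{X}}>1/M$ and $n\geq M$ supply exactly this. Combined with the explicit operator-norm bound $\|Q_{1/n}\|\leq\int_{1}^{\infty}h_{\bar{X}}(1/s)\,s^{1/n-1}\,ds$ from~(\ref{norma}) and the integral estimate above, Theorem~\ref{inclusion} yields the first displayed inequality of Part~3. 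Note that one should also absorb the constant $\|Q\|_{\bar{X}\to\bar{X}}$; this is harmless since $\underline{\alpha}_{\bar{X}}>1/M\geq 1/n$ implies $Q_{J_{B^{n}}}\geq Q$ pointwise, so $\|Q\|\leq\|Q_{1/n}\|$.

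The only real obstacle, and the main thing to verify carefully, is the asymptotic statement at the end: namely that the prefactor is bounded independently of $n$. Writing $\gamma_{n}/\gamma_{n-1}=\sqrt{\pi}\,\Gamma(1+(n-1)/2)/\Gamma(1+n/2)$ reduces this to
\[
\lim_{n\to\infty}\pi\,\frac{n^{1/2}\,\Gamma(1+(n-1)/2)}{\Gamma(1+n/2)}\left(\tfrac{1}{2}\right)^{1-1/n},
\]
which by the classical Stirling ratio $\Gamma(x)/\Gamma(x+1/2)\sim x^{-1/2}$ applied at $x=(n+1)/2$ gives $\Gamma(1+(n-1)/2)/\Gamma(1+n/2)\sim\sqrt{2/n}$, so the limit equals $\pi\cdot\sqrt{2}\cdot(1/2)=\sqrt{2}\,\pi/2$. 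Since the prefactor converges, it is uniformly bounded in $n$, and the boundedness of $\int_{1}^{\infty}h_{\bar{X}}(1/s)\,s^{1/n-1}\,ds$ uniformly for $n\geq M$ follows from domination by $\int_{1}^{\infty}h_{\bar{X}}(1/s)\,s^{1/M-1}\,ds<\infty$. Combining these yields the claimed dimension-free constant $c$.
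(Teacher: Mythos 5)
Your proposal is correct and follows exactly the route the paper takes: the paper offers no separate proof, stating only that the theorem follows from ``the previous discussion, combined with Theorems~\ref{main} and~\ref{inclusion},'' and you have filled in precisely those steps (the renormalization $f_\mu^\ast(s)=f^\ast(\gamma_n s)$, the integral estimate via~(\ref{isobola}), the Boyd-index criteria from~(\ref{alcance}), and the Stirling asymptotics). Your observation that $\|Q\|_{\bar{X}\to\bar{X}}\le\|Q_{J_{B^n}}\|_{\bar{X}\to\bar{X}}$ via the pointwise domination $Q\le Q_J$ from the proof of Theorem~\ref{inclusion} is in fact slightly more careful than the paper's displayed constant, which omits the resulting factor of~$2$ from $\|Q\|+\|Q_J\|\le 2\|Q_J\|$; this has no bearing on the dimension-free conclusion.
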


As a consequence we obtain the following (cf. \cite{BM})

\begin{corollary}
$\left(  B^{n},\left|  \cdot\right|  ,\mu\right)  $ is of Gaussian
isoperimetric type near zero.
\end{corollary}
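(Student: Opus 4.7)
The plan is to combine Part~1 of the preceding theorem with the converse statement Corollary~\ref{ma1}. Since $\underline{\alpha}_{L^{1}}=1>0$, the space $L^{1}$ is away from $L^{\infty}$ and the Hardy operator $Q$ is bounded on $L^{1}$ (with $\Vert Q\Vert_{L^{1}\to L^{1}}=1$). Thus Part~1 of the preceding theorem, specialized to $\bar{X}=L^{1}$, gives, for every $f\in W^{1,1}(B^{n})$,
\[
\int_{0}^{1/2}\left\Vert \left(f_{\mu}^{\ast}(\cdot)-f_{\mu}^{\ast}(t)\right)\chi_{[0,t)}(\cdot)\right\Vert _{L^{1}}\frac{dt}{t(\log\frac{1}{t})^{1/2}}\leq c\left\Vert \left|\nabla f\right|_{\mu}^{\ast}\right\Vert _{L^{1}},
\]
with a constant $c$ that is independent of $n$. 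The $n$-independence is precisely the statement of the limit $\sqrt{\pi}\,n^{1/2}(\gamma_{n}/\gamma_{n-1})(1/2)^{1-1/n}\to\tfrac{\sqrt{2}}{2}\pi$ computed inside the preceding theorem.

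Next I apply Corollary~\ref{ma1} with $r=1/2$, $\bar{X}=L^{1}$, and $G(s)=1/(s(\log\tfrac{1}{s})^{1/2})$. The displayed inequality above is exactly the hypothesis (\ref{habra}) of that corollary, so the conclusion yields
\[
\frac{t}{I_{B^{n}}(t)}\int_{t}^{1/2}\frac{ds}{s(\log\frac{1}{s})^{1/2}}\leq C,\qquad t\in(0,1/2),
\]
with $C$ independent of $n$.

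The remainder is a routine computation. The inner integral equals $2\bigl[(\log\tfrac{1}{t})^{1/2}-(\log 2)^{1/2}\bigr]$, which is asymptotically equivalent to $(\log\tfrac{1}{t})^{1/2}$ as $t\to 0^{+}$; say for $t\le 1/e^{2}$ we have a lower bound of the form $c'(\log\tfrac{1}{t})^{1/2}$. Rearranging delivers
\[
I_{B^{n}}(t)\succeq t\,(\log\tfrac{1}{t})^{1/2}
\]
on a neighborhood of the origin, with an implicit constant independent of $n$, which is precisely the Gaussian isoperimetric type condition (\ref{laiso}) near zero.

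The only delicate point is to verify that every constant in the chain is genuinely $n$-independent. This is automatic: Part~1 of the theorem supplies an explicit dimensionless bound, and the proof of Corollary~\ref{ma1} transfers the constant in (\ref{habra}) verbatim into (\ref{estima}). Note also that the pointwise estimate (\ref{aatot}) used in Part~1 is available on $B^{n}$ by the references cited just before the theorem, so the scheme applies without modification.
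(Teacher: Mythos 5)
Your proof is correct and follows essentially the same route as the paper: invoke Part 1 of the preceding theorem with $\bar X = L^1$ to obtain the dimensionless inequality (\ref{habra}) with $G(s)=1/(s(\log\tfrac1s)^{1/2})$ and $r=1/2$, then feed it into Corollary \ref{ma1} to get $\tfrac{t}{I_{B^n}(t)}\int_t^{1/2}\tfrac{ds}{s(\log\frac1s)^{1/2}}\le C$, and finally rearrange using $\int_t^{1/2}\tfrac{ds}{s(\log\frac1s)^{1/2}}=2\bigl[(\log\tfrac1t)^{1/2}-(\log 2)^{1/2}\bigr]\succeq(\log\tfrac1t)^{1/2}$ for $t$ small. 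The paper compresses the first two steps into a single appeal to Corollary \ref{ma1}, and your version is in fact a bit more careful than the paper's own computation, which restricts to $t\in(0,1/4)$ and asserts $2(\ln 2)^{1/2}\le\frac12(\ln\tfrac1t)^{1/2}$ there — an inequality that only kicks in for much smaller $t$; your choice of threshold (e.g.\ $t\le e^{-2}$ with an explicit small constant $c'$) avoids that slip while reaching the same conclusion that $I_{B^n}(t)\succeq t(\log\tfrac1t)^{1/2}$ near zero with an $n$-independent constant.
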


\begin{proof}
By Corollary \ref{ma1} we have%
\[
t\left(  \ln\frac{1}{t}\right)  ^{1/2}-2t\left(  \ln2\right)  ^{1/2}=\int
_{t}^{1/2}\frac{1}{s\left(  \ln\frac{1}{s}\right)  ^{\frac{1}{2}}}ds\leq
CI_{B^{n}}(t),\text{ \ \ }0<t<1/2.
\]
Therefore, for $t\in(0,1/4)$%
\begin{align*}
t\left(  \ln\frac{1}{t}\right)  ^{1/2}  & \leq CI_{B^{n}}(t)+2\left(
\ln2\right)  ^{1/2}\\
& \leq CI_{B^{n}}(t)+\frac{1}{2}\left(  \ln\frac{1}{t}\right)  ^{1/2}%
\end{align*}
and the desired result follows.
\end{proof}

\subsection{The $n-$sphere}

Let $n\in\mathbb{N}$, $n\geq2,$ and let $\mathbb{S}^{n}$ be the unit sphere.
Consider the metric space ($\mathbb{S}^{n},d,\frac{dx_{n}}{\omega_{n}}),$
where $d$ is the geodesic distance, $dx_{n}$ is the Lebesgue measure on
$\mathbb{R}^{n}$ and $\omega_{n}=2\pi^{\frac{n+1}{2}}/\Gamma(\frac{n+1}{2}).$

Then, we have that for $f\in Lip(\mathbb{S}^{n})$ (cf. \cite[Proposition
1.5]{bobk}),%
\[
\left(  \int_{\mathbb{S}^{n}}\left|  f(x)-\int_{\mathbb{S}^{n}}fdx_{n}\right|
^{\frac{n}{n-1}}\frac{dx_{n}}{\omega_{n}}\right)  ^{\frac{n-1}{n}}\leq
\frac{\omega_{n}}{2\omega_{n-1}}\int_{\mathbb{S}^{n}}\left|  \nabla
f(x)\right|  \frac{dx_{n}}{\omega_{n}}.
\]
It follows that (cf. \cite{Maz})%
\[
I_{(\mathbb{S}^{n},d,\frac{dx_{n}}{\omega_{n}})}(t)\geq\frac{2\omega_{n-1}%
}{\omega_{n}}\min(t,1-t)^{1-1/n}=J_{\mathbb{S}^{n}}(t),\ \ 0<t<1.
\]

Consider the function%
\[
G(t)=\frac{1}{t\left(  \ln\frac{1}{t}\right)  ^{\frac{1}{2}}}%
\]
then%
\begin{align*}
\int_{0}^{1/2}\frac{t}{J_{\mathbb{S}^{n}}(t)}G(t)dt  &  =\frac{\omega_{n}%
}{2\omega_{n-1}}\int_{0}^{1/2}t^{1/n}\frac{dt}{t\left(  \ln\frac{1}{t}\right)
^{\frac{1}{2}}}\\
&  \leq\frac{\omega_{n}}{2\omega_{n-1}}\int_{0}^{1}t^{1/n}\frac{dt}{t\left(
\ln\frac{1}{t}\right)  ^{\frac{1}{2}}}\\
&  =\frac{\omega_{n}}{\omega_{n-1}}\sqrt{\pi}n^{\frac{1}{2}}.
\end{align*}
The isoperimetric operator in this case is given by%
\[
Q_{J_{\mathbb{S}^{n}}}f(t):=\frac{J_{\mathbb{S}^{n}}(t)}{t}\int_{t}%
^{1/2}f(z)\frac{dz}{J_{\mathbb{S}^{n}}(z)}=t^{-1/n}\int_{t}^{1/2}%
z^{1/n}f(z)\frac{dz}{z}=Q_{1/n}f(t).
\]
Therefore, $Q_{J_{\mathbb{S}^{n}}}$ is bounded on $\bar{X}$ if and only if
$\underline{\alpha}_{\bar{X}}>1/n.$ Moreover,%
\[
\left\|  Q_{1/n}\right\|  _{\bar{X}\rightarrow\bar{X}}\leq\int_{1}^{\infty
}h_{\bar{X}}(\frac{1}{s})s^{\frac{1}{n}-1}ds.
\]

Therefore, Theorems \ref{main} and \ref{inclusion} yield

\begin{theorem}
\label{esfera}Let $\bar{X}$ be a r.i. space on $(0,1)$.

\begin{enumerate}
\item  If $\underline{\alpha}_{\bar{X}}>0,$ then\footnote{Note that
$u_{\frac{dx_{n}}{\omega_{n}}}^{\ast}(s)=u_{dx_{n}}^{\ast}(\omega_{n}s)$.},
for all $f\in Lip(\mathbb{S}^{n})$%
\[
\int_{0}^{1/2}\left\|  \left(  f^{\ast}(\omega_{n}s)-f^{\ast}(\omega
_{n}t)\right)  \chi_{\lbrack0,t)}(s)\right\|  _{\bar{X}}\frac{dt}{t\left(
\ln\frac{1}{t}\right)  ^{\frac{1}{2}}}\leq\sqrt{\pi}n^{\frac{1}{2}}%
\frac{\omega_{n-1}}{\omega_{n}}\left\|  Q\right\|  _{\bar{X}\rightarrow\bar
{X}}\left\|  \left|  \nabla f\right|  ^{\ast}(\omega_{n}s)\right\|  _{\bar{X}%
}.
\]

\item  If $\overline{\alpha}_{\bar{X}}<1,$ then for all $f\in Lip(\mathbb{S}%
^{n})\ $%
\begin{align*}
&  \int_{0}^{1/2}\left\|  \left(  \frac{1}{s}\int_{0}^{s}f^{\ast}(\omega
_{n}z)dz-f^{\ast}(\omega_{n}t)\right)  \chi_{\lbrack0,t)}(s)\right\|
_{\bar{X}}\frac{dt}{t\left(  \ln\frac{\omega_{n}}{t}\right)  ^{\frac{1}{2}}}\\
&  \leq\sqrt{\pi}n^{\frac{1}{2}}\frac{\omega_{n-1}}{\omega_{n}}\left\|
P\right\|  _{\bar{X}\rightarrow\bar{X}}\left\|  \left|  \nabla f\right|
^{\ast}(\omega_{n}s)\right\|  _{\bar{X}}.
\end{align*}

\item  If $\underline{\alpha}_{\bar{X}}>0,$ let $M$ be\ the smallest natural
number such that
\[
\underline{\alpha}_{X}>1/M.
\]
Suppose that $n\geq M,$ then for all $f\in Lip(\mathbb{S}^{n}),$ we have
\begin{align*}
&  \int_{0}^{1/2}\left\|  (f-med(f))^{\ast}(\omega_{n}s)\chi_{\lbrack
0,t)}(s)\right\|  _{\bar{X}}\frac{dt}{t\left(  \ln\frac{1}{t}\right)
^{\frac{1}{2}}}\\
&  \leq\left(  \sqrt{\pi}n^{\frac{1}{2}}\frac{\omega_{n-1}}{\omega_{n}}%
\int_{1}^{\infty}h_{X}(\frac{1}{s})s^{\frac{1}{M}}\frac{ds}{s}\right)
\left\|  \left|  \nabla f\right|  ^{\ast}(\omega_{n}s)\right\|  _{\bar{X}}.
\end{align*}
In particular, since
\[
\lim_{n\rightarrow\infty}\frac{\omega_{n}}{\omega_{n-1}}\sqrt{\pi}n^{\frac
{1}{2}}=\lim_{n\rightarrow\infty}\pi\frac{n^{\frac{1}{2}}\Gamma(\frac{n}{2}%
)}{\Gamma(\frac{n+1}{2})}=\sqrt{2}\pi,
\]
there exists a constant $c$ independent of $n,$ such that for all $f\in
Lip_{X}(\mathbb{S}^{n}),$%
\[
\int_{0}^{1/2}\left\|  (f-med(f))^{\ast}(\omega_{n}s)\chi_{\lbrack
0,t)}(s)\right\|  _{\bar{X}}\frac{dt}{t\left(  \ln\frac{1}{t}\right)
^{\frac{1}{2}}}\leq c\left\|  \left|  \nabla f\right|  ^{\ast}(\omega
_{n}s)\right\|  _{\bar{X}}.
\]
\end{enumerate}
\end{theorem}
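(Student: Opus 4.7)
The plan is to recognize Theorem \ref{esfera} as a direct specialization of the general Theorems \ref{main} and \ref{inclusion} to the isoperimetric estimator $J_{\mathbb{S}^n}$ and the weight $G(t)=1/(t(\ln 1/t)^{1/2})$, with all nontrivial ingredients already assembled in the paragraphs preceding the statement. In particular, the bound
\[
\int_0^{1/2}\frac{t}{J_{\mathbb{S}^n}(t)}\,G(t)\,dt \;\le\; \frac{\omega_n}{\omega_{n-1}}\sqrt{\pi}\,n^{1/2},
\]
together with the identification $Q_{J_{\mathbb{S}^n}} = Q_{1/n}$, are already displayed in the text. What remains is to match each Boyd-index hypothesis with the appropriate boundedness statement and quote the relevant theorem.

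For Part (1), the condition $\underline{\alpha}_{\bar{X}}>0$ is exactly the Boyd characterization, via (\ref{alcance}), of $Q=Q_0$ being bounded on $\bar{X}$; Theorem \ref{main}(1) applied with $\Omega=\mathbb{S}^n$, $J=J_{\mathbb{S}^n}$ and $G$ as above then yields the asserted inequality, the transition from $f^{\ast}_\mu$ to $f^{\ast}(\omega_n s)$ being the rescaling to unnormalized Lebesgue measure indicated in the footnote. Part (2) proceeds identically: $\overline{\alpha}_{\bar{X}}<1$ is equivalent to $P$ being bounded on $\bar{X}$, and Theorem \ref{main}(2) applies verbatim.

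Part (3) requires Theorem \ref{inclusion} in place of Theorem \ref{main}, since it controls oscillation of $f$ around its median. Its hypothesis, namely boundedness of $Q_{J_{\mathbb{S}^n}} = Q_{1/n}$ on $\bar{X}$, is equivalent by (\ref{alcance}) to $\underline{\alpha}_{\bar{X}}>1/n$, which the assumption $n\ge M$ provides since $\underline{\alpha}_{\bar{X}}>1/M\ge 1/n$. The explicit norm bound (\ref{norma}) for $Q_{1/n}$ delivers the factor $\int_1^\infty h_{\bar{X}}(1/s)\,s^{1/n-1}\,ds$, and monotonicity in $n$ allows one to replace $1/n$ by $1/M$; the same bound dominates the $\|Q\|_{\bar{X}\to\bar{X}}$ term that comes out of Theorem \ref{inclusion}, so only the $Q_{1/n}$ contribution needs to be tracked. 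The dimensionless corollary at the end is then the Stirling asymptotic $\sqrt{\pi}\,n^{1/2}\,\omega_n/\omega_{n-1}\to \sqrt{2}\,\pi$, already recorded inside the statement. There is no substantive obstacle: the theorem is essentially a triple quotation of the general framework, the only analytic inputs being the integral $\int_0^{1/2}(t/J_{\mathbb{S}^n})G\,dt$ and the standard Stirling limit, both handled in the surrounding text.
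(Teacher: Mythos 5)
Your proposal is correct and follows precisely the route the paper itself takes: the text immediately preceding Theorem \ref{esfera} assembles the isoperimetric estimator $J_{\mathbb{S}^n}$, the integral $\int_0^{1/2}(t/J_{\mathbb{S}^n})G\,dt\preceq \sqrt{\pi}\,n^{1/2}\omega_n/\omega_{n-1}$, and the identification $Q_{J_{\mathbb{S}^n}}=Q_{1/n}$, and the paper then states the theorem as an immediate consequence of Theorems \ref{main} and \ref{inclusion}, which is exactly the triple quotation you describe. The only point worth noting is that Theorem \ref{inclusion} produces the sum $\|Q\|+\|Q_{J}\|$, so your domination $\|Q\|\le\|Q_{J}\|$ actually yields an extra factor of $2$ that the stated constant does not display; this small imprecision is already present in the paper (and in the analogous ball theorem) and is harmless for the dimension-free conclusion, which allows a generic constant $c$.
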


\subsection{Riemannian manifolds with positive curvature}

Let $V^{n}$ be a compact Riemannian manifold (without boundary). Let
$R(V^{n})$ denote the infimum over all the unit tangent vectors of $V^{n}$ of
the Ricci tensor, and let $I_{V^{n}}$ be the isoperimetric profile of the
manifold (with respect to the normalized Riemannian measure $d\sigma_{n}$). If
$R(V^{n})\geq(n-1)k>0$, then (cf. \cite{bobk})
\[
\sqrt{\frac{2k}{\pi}}\frac{\Gamma(\frac{n+1}{2})}{\Gamma(\frac{n}{2})}\left(
\int_{V^{n}}\left|  f(x)-\int_{V^{n}}fd\sigma_{n}\right|  ^{\frac{n}{n-1}%
}d\sigma_{n}\right)  ^{\frac{n-1}{n}}\leq\int_{V^{n}}\left|  \nabla
f(x)\right|  d\sigma_{n}\ \ (f\in Lip(V^{n})).
\]
As a consequence (cf. \cite{Maz}) the following isoperimetric inequality
holds
\[
I_{V^{n}}(t)\geq\sqrt{\frac{2k}{\pi}}\frac{\Gamma(\frac{n+1}{2})}{\Gamma
(\frac{n}{2})}\min(t,1-t)^{1-1/n}=J_{V^{n}}(t),\ \ 0<t<1.
\]

Theorems \ref{main} and \ref{inclusion} give:

\begin{theorem}
Let $\bar{X}$ be a r.i. space.

\begin{enumerate}
\item  If $\underline{\alpha}_{\bar{X}}>0,$ then for all $f\in Lip(V^{n}),$%
\[
\int_{0}^{\frac{1}{2}}\left\|  \left(  f_{\sigma_{n}}^{\ast}(s)-f_{\sigma_{n}%
}^{\ast}(t)\right)  \chi_{\lbrack0,t)}(s)\right\|  _{\bar{X}}\frac
{dt}{t\left(  \ln\frac{1}{t}\right)  ^{\frac{1}{2}}}\leq\frac{\pi}{\sqrt{2k}%
}\frac{n^{\frac{1}{2}}\Gamma(\frac{n}{2})}{\Gamma(\frac{n+1}{2})}\left\|
Q\right\|  _{\bar{X}\rightarrow\bar{X}}\left\|  \left|  \nabla f\right|
_{\sigma_{n}}^{\ast}\right\|  _{\bar{X}}.
\]

\item  If $\overline{\alpha}_{\bar{X}}<1,$ then for all $f\in Lip_{\bar{X}%
}(V^{n}),\ $%
\[
\int_{0}^{\frac{1}{2}}\left\|  \left(  f_{\sigma_{n}}^{\ast}(s)-f_{\sigma_{n}%
}^{\ast}(t)\right)  \chi_{\lbrack0,t)}(s)\right\|  _{\bar{X}}\frac
{dt}{t\left(  \ln\frac{1}{t}\right)  ^{\frac{1}{2}}}\leq\frac{\pi}{\sqrt{2k}%
}\frac{n^{\frac{1}{2}}\Gamma(\frac{n}{2})}{\Gamma(\frac{n+1}{2})}\left\|
P\right\|  _{\bar{X}\rightarrow\bar{X}}\left\|  \left|  \nabla f\right|
_{\sigma_{n}}^{\ast}\right\|  _{\bar{X}}.
\]

\item  If $\underline{\alpha}_{\bar{X}}>0,$ let $M$ be\ the smallest natural
number such that
\[
\underline{\alpha}_{X}>1/M.
\]
Suppose that $n\geq M.$ Then for all $f\in Lip(V^{n}),$ we have
\[
\int_{0}^{\frac{1}{2}}\left\|  (f-med(f))_{\sigma_{n}}^{\ast}(s)\chi
_{\lbrack0,t)}(s)\right\|  _{\bar{X}}\frac{dt}{t\left(  \ln\frac{1}{t}\right)
^{\frac{1}{2}}}\leq\left(  \frac{\pi}{\sqrt{2k}}\frac{n^{\frac{1}{2}}%
\Gamma(\frac{n}{2})}{\Gamma(\frac{n+1}{2})}\int_{1}^{\infty}h_{\bar{X}}%
(\frac{1}{s})s^{\frac{1}{M}}\frac{ds}{s}\right)  \left\|  \left|  \nabla
f\right|  _{\sigma_{n}}^{\ast}\right\|  _{\bar{X}}.
\]
In particular, since
\[
\lim_{n\rightarrow\infty}\frac{\pi}{\sqrt{2k}}\frac{n^{\frac{1}{2}}%
\Gamma(\frac{n}{2})}{\Gamma(\frac{n+1}{2})}=\lim_{n\rightarrow\infty}\pi
\frac{n^{\frac{1}{2}}\Gamma(\frac{n}{2})}{\Gamma(\frac{n+1}{2})}=\frac{\pi
}{\sqrt{k}},
\]
there exists a constant $c$ independent of $n,$ such that for all $f\in
Lip(V^{n})$%
\[
\int_{0}^{\frac{1}{2}}\left\|  (f-med(f))_{\sigma_{n}}^{\ast}(s)\chi
_{\lbrack0,t)}(s)\right\|  _{\bar{X}}\frac{dt}{t\left(  \ln\frac{1}{t}\right)
^{\frac{1}{2}}}\leq c\left\|  \left|  \nabla f\right|  _{\sigma_{n}}^{\ast
}\right\|  _{\bar{X}}.
\]
\end{enumerate}
\end{theorem}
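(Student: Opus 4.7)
The plan is to transcribe the $\mathbb{S}^n$ argument essentially verbatim, with the Riemannian estimator $J_{V^n}(t)=\sqrt{2k/\pi}\,(\Gamma(\tfrac{n+1}{2})/\Gamma(\tfrac{n}{2}))\min(t,1-t)^{1-1/n}$ replacing $J_{\mathbb{S}^n}$. On $(0,1/2)$ this estimator has the same functional shape $c_n\,t^{1-1/n}$ as in the spherical case, differing only in the constant $c_n$, so all of the computations carry over directly. Throughout I would fix the weight $G(t)=1/(t(\ln(1/t))^{1/2})$.

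First I would verify the integrability hypothesis of Theorem \ref{main}:
\[
\int_0^{1/2}\frac{t}{J_{V^n}(t)}G(t)\,dt \;=\; \sqrt{\frac{\pi}{2k}}\,\frac{\Gamma(\tfrac{n}{2})}{\Gamma(\tfrac{n+1}{2})}\int_0^{1/2}t^{1/n}\frac{dt}{t(\ln(1/t))^{1/2}}.
\]
Bounding the second factor above by extending the integration to $(0,1)$ and substituting $t^{1/n}=e^{-z/n}$ yields $\sqrt{\pi}\,n^{1/2}$, producing the overall prefactor $(\pi/\sqrt{2k})\,n^{1/2}\Gamma(\tfrac{n}{2})/\Gamma(\tfrac{n+1}{2})$ that appears in all three parts of the statement. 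Parts (1) and (2) are then immediate applications of Theorem \ref{main}: the hypotheses $\underline{\alpha}_{\bar{X}}>0$ and $\overline{\alpha}_{\bar{X}}<1$ are, by (\ref{alcance}), precisely the boundedness of $Q$ and $P$ on $\bar{X}$.

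For part (3) I would compute the associated isoperimetric Hardy operator. Since $J_{V^n}$ is a constant multiple of $t^{1-1/n}$ on $(0,1/2)$, that constant cancels between numerator and denominator in the definition of $Q_{J}$, so $Q_{J_{V^n}}=Q_{1/n}$. Under the assumption $n\geq M$ we have $\underline{\alpha}_{\bar{X}}>1/M\geq 1/n$, hence by (\ref{alcance})--(\ref{norma}) the operator $Q_{1/n}$ is bounded on $\bar{X}$ with norm majorized by $\int_1^\infty h_{\bar{X}}(1/s)s^{1/M-1}\,ds$. Theorem \ref{inclusion} then delivers the median-centered inequality with the stated constant. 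The concluding dimensionless statement follows from the Stirling asymptotic $\lim_{n\to\infty} n^{1/2}\Gamma(\tfrac{n}{2})/\Gamma(\tfrac{n+1}{2})=\sqrt{2}$, which gives $\pi/\sqrt{k}$ as the limit of the prefactor.

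I do not expect any genuine obstacle: the argument is a direct transcription of the $\mathbb{S}^n$ proof with one multiplicative constant replaced. The only point that requires care is that the entire scheme rests on the L\'evy--Gromov type lower bound $I_{V^n}\geq J_{V^n}$, which is where the positive Ricci hypothesis $R(V^n)\geq(n-1)k>0$ enters; once this bound is taken from the literature cited in the excerpt, Theorems \ref{Iso}, \ref{main} and \ref{inclusion} apply without further modification since they depend only on the isoperimetric profile and not on the particular geometry.
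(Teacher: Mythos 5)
Your proposal is correct and follows exactly the route the paper takes (the paper's proof of this theorem is implicit—it simply invokes Theorems \ref{main} and \ref{inclusion} after recording the Ricci-curvature isoperimetric estimator $J_{V^n}$). Your computation of $\int_0^{1/2}\frac{t}{J_{V^n}(t)}G(t)\,dt$, the identification $Q_{J_{V^n}}=Q_{1/n}$ with bound $\int_1^\infty h_{\bar X}(1/s)s^{1/M-1}\,ds$ under $n\geq M$, and the Stirling limit $n^{1/2}\Gamma(\tfrac{n}{2})/\Gamma(\tfrac{n+1}{2})\to\sqrt{2}$ all match the paper's argument.
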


\end{document}